\documentclass[11 pt,a4paper]{article}
\usepackage[utf8]{inputenc}

\textwidth=16cm \textheight=23cm
\parskip=\medskipamount
\parindent=0pt
\topmargin=-1cm \oddsidemargin=0cm

\setlength{\unitlength}{.7cm}

\usepackage{graphicx}
\usepackage{bm}
\usepackage{amssymb, amsmath} 
\usepackage{latexsym} 
\usepackage{graphics} 
\usepackage{url} 
\usepackage{enumerate}
\usepackage[all]{xy}
\usepackage{float}
\usepackage{amsrefs}
\usepackage{color}

\usepackage{comment}

\usepackage{amsthm}

\usepackage[font=small,labelfont=bf]{caption}

\newtheorem{theo}{Theorem}[section]

\newtheorem{lemm}[theo]{Lemma}
\newtheorem{prop}[theo]{Proposition}
\newtheorem{exam}[theo]{Example}

\newtheorem{defi}[theo]{Definition}
\newtheorem{rem}[theo]{Remark}

\title{$n$-gon centers and central lines}
\author{Marta Farr\'e Puiggal\'i\footnote{Department of Mathematics, University of Antwerpen, (Belgium)}, Luis Felipe Prieto-Mart\'inez\footnote{Departamento de Matem\'atica Aplicada, ETS Arquitectura, Universidad Polit\'ecnica de Madrid (Spain)}}

\begin{document}

\maketitle

\begin{abstract} In this paper we provide a review of the concept of center of a $n$-gon, generalizing the original idea given by C. Kimberling for triangles. We also generalize the concept of central line for $n$-gons for $n\geq 3$ and  establish its basic properties.

\medskip

\textbf{Keywords:} Polygon, Triangle, Center, Center Function, Center of a Polygon, Central Line

\medskip

\textbf{Mathematics Subject Classification:} Primary 51M04 \and Secondary 51M15
\end{abstract}

\section{Introduction}

For $n\geq 3$, let us denote by $\mathcal P_n$ the set of all $n$-gons in the plane with their vertices labelled. The elements in $\mathcal P_n$ can be identified with $n$-tuples $(V_1,\ldots,V_n)$ of elements in $\mathbb R^2$. This is the approach taken in \cite{BS}.

Let us denote by $\rho$ and $\sigma$ the  permutations of $\{1,\ldots, n\}$ given by
\begin{equation}\label{eq.rho}\rho(i)=i+1\mod n \qquad \mbox{and}\qquad \sigma(i)=n+2-i \mod n .\end{equation}

\noindent Consider also the  dihedral group of $2n$ elements $D_n$ generated by $\rho$ and $\sigma$. We say that two $n$-gons $(V_1,\ldots,V_n)$ and $(V_1',\ldots, V_n')$ differ in their labelling if there exists an element $\alpha\in D_n$ such that, for every $i=1,\ldots,n$, $V_i'=V_{\alpha(i)}$.

The study of centers of polygons already began with the \emph{classical triangle centers} (centroid, circumcenter, incenter and orthocenter), which were already known by the ancient greeks. Over the years, many other points related to the triangle were studied. This motivated that in the last decade of the past century, C. Kimberling introduced an abstract definition of triangle center and triangle central line, see for example \cites{K.FE, K.CL}. He removed the idea of geometric significance from these objects in order to allow a systematic study and classification. He also created the \emph{Encyclopedia of Triangle Centers} \cite{K.E}, where he listed all known interesting cases (44072 at this moment).

Interesting points, also named centers, are also known for $n$-gons, for $n\geq 4$, as we will explain in a moment. So, after Kimberling's articles, some attempts were made to extend the study of centers to $n$-gons for $n\geq 4$. We highlight some of them:

\begin{itemize}
    \item A definition of center for $n$-simplices was given in \cite{E}. If we had defined $n$-gons to be sets $\{V_1,\ldots, V_n\}$ instead of $n$-tuples $(V_1,\ldots, V_n)$, we could have followed this approach. But it does not take into account the \emph{adjacency structure} between the vertices of a $n$-gon.

    \item In \cites{AS.Q, AM, M}, among others, some centers of plane quadrilaterals and even $n$-gons are studied, but no formal definition of this concept is provided.
    
    \item In the website \cite{VT} we can find another  \emph{encyclopedia} (or list) of $n$-gon centers and lines, specially focusing on the case $n=4$.

\end{itemize}

\noindent Finally in \cite{PS.C} the authors succesfully generalized the definition of triangle (trigon) center function given by Kimberling to $n$-gons for $n\geq 4$.

This paper is devoted to the concepts of $n$-gon center and $n$-gon central line. The main contributions herein are the following. First, in Section \ref{section.centers}, we provide a new formal definition of \emph{center of a $n$-gon}, which is more clear than the one given in \cite{PS.C}. We prove that our definition and the one in \cite{PS.C} are equivalent (Theorem \ref{theo.interpretation}) and, as a consequence, we solve one of the open questions in that article. Then, after revisiting the ideas of central points and lines for trigons in Section \ref{section.lineskimberling}, we study possible generalizations of the concept of central line for $n\geq 4$. We show that some desirable properties for central lines that hold for trigons (see Remark \ref{theo.main3}) do not hold for central lines of $n$-gons, for $n\geq 4$ (see Theorem \ref{theo.main} and Example \ref{exam.rectangulo}). In Section \ref{sect.symmetry} we study the problem that was already mentioned in \cites{AS.Q, E, PS.C} concerning the relationship between regularity and coincidence of centers. We obtain a result in this direction (see Theorem \ref{theo.symmetry}). Finally, in Section \ref{section.app}, we include some illustrative applications of our approach to concrete problems in plane geometry.

\section{Centers of polygons revisited} \label{section.centers}

For this section the main reference is \cite{PS.C}, but the notation, the use of some terms and the approach are slightly different in this paper. In \cite{PS.C} the authors first introduce \emph{$n$-gon center functions} and then define \emph{centers}, which are a geometric interpretation of the first concept. Here this is done the other way around: we define centers as a concept with geometric meaning and then we introduce $n$-gon center functions as a natural consequence. This is more consistent with the concepts that we will  introduce in the other sections, for instance central lines.

From now on, $\mathcal F_n$ will denote a subset of $\mathcal P_n$ which is \emph{closed with respect to similarities and relabellings}, that is, (1) if $P,Q$ are similar and $P\in\mathcal F_n$ then $Q\in\mathcal F_n$ and (2) if $P,Q$ differ only in their labelling and $P\in\mathcal F_n$, then $Q\in\mathcal F_n$.

\begin{defi} \label{defi} 
 A \textbf{center} is, formally, a function $\Phi:\mathcal F_n\to\mathbb R^2$ such that

\begin{itemize}

\item[(1)] It \emph{commutes with similarities}, that is, for every two similar $n$-gons $P,Q$,  the corresponding points $\Phi(P),\Phi(Q)$ are related via the same plane transformation.

\item[(2)] It is \emph{invariant with respect to relabellings}, that is, if two $n$-gons $P,Q$ differ only in their labellings,  $\Phi(P)=\Phi(Q)$.

\end{itemize}

\end{defi}

Sometimes we will commit an abuse of notation, as is done in the case of classical triangle centers. In Elementary Geometry courses the term \emph{incenter} is used for both the function that maps the set of triangles to the set of points in the plane and for the point corresponding to a concrete triangle. In this paper, if we say that a point $X$ in the plane is a center of a concrete $n$-gon $P$ it means that we assume that there exist some $\mathcal F_n$ containing $P$ and a function $\Phi:\mathcal F_n\subset\mathcal P_n\to\mathbb R^2$ which is a center in the previous sense and satisfies $\Phi(P)=X$. Note that not every point $X$ in the plane is, in general, a center of a given $n$-gon, see Section \ref{sect.symmetry}.

Even for some well known centers we need the family $\mathcal F_n$ to be a proper subset of $\mathcal P_n$. For instance, we may need to exclude from $\mathcal P_n$ the \textbf{flat $n$-gons}, that is, those for which all the vertices are collinear. For $n=3$ this is required for the usual interpretation of the \emph{circumcenter}. Sometimes we need to restrict to an even smaller family $\mathcal F_n$. For example, for $n=4$, the \emph{circumcenter} (the center of the circumcircle if it exists) is only defined for $\mathcal F_n$ being the set of cyclic quadrigons.

Let us consider the space $\mathcal T_n$ to be the quotient space of $\mathcal P_n$ with respect to the equivalence relation induced by plane congruences: two elements $P,Q$ are mapped to the same equivalence class if they are congruent and the corresponding congruence maps the vertex $i$ of $P$ to the vertex $i$ of $Q$ for $i=1,\ldots, n$. The elements in $\mathcal T_n$  can be identified with the corresponding \textbf{distance matrix}, that is, the $n\times n$ square matrix $[d_{ij}]_{1\leq i,j\leq n}$ such that, for $i,j=1,\ldots, n$,   $d_{ij}=d(V_i,V_j)$ for any representative $(V_1,\ldots, V_n)$ of the equivalence class. Not every $n\times n$ hollow, symmetric and positive matrix corresponds to the distance matrix of a set of $n$ points in the plane. For some information about the characterization of such matrices we refer to \cites{BB, LL, PS.C, S.NDIM}.

\medskip
Let us define the following:

\begin{defi} Let $\mathcal F_n$ be a subset of $\mathcal P_n$ which is closed with respect to similarities and relabellings, and let $\widetilde{\mathcal F}_n$ be the corresponding projection onto the quotient space $\mathcal T_n$.   A \textbf{$n$-gon center function} is a function $g:\widetilde{\mathcal F}_n\to\mathbb R$ satisfying the two following properties:

\begin{itemize}

\item[(i)] (symmetry condition) Let $\sigma$ denote the permutation of the set $\{1,\ldots, n\}$ defined in Equation \eqref{eq.rho}. Then $g([d_{ij}])=g([d_{\sigma(i)\sigma(j)}])$.

\item[(ii)] (homogeneity) There exists some $m\in\mathbb{N}$ such that for every $\lambda\in\mathbb R_{> 0}$, $g([\lambda d_{ij}])=\lambda^m g([d_{ij}])$.

\end{itemize}

\end{defi}

 In \cite{PS.C} we can find some motivation for the following theorem, although one of the two directions is not proved there. 
 This result also fills a gap in \cite{PS.C}: it shows that the answer to Open Question 26 is affirmative (since \emph{implicit centers}, in the sense of that article, is a particular case of the definition of center included herein).

\begin{theo}\label{theo.interpretation}  Suppose that we have a map $\Phi:\mathcal F_n\subset \mathcal P_n\to \mathbb R^2$. Then the following two conditions are equivalent:

\begin{itemize}

\item[(a)] $\Phi$ is a $n$-gon center.

\item[(b)] There exists a $n$-gon center function $g:\widetilde{\mathcal F_n}\to\mathbb R$ (which may not be unique) such that, for every $P=(V_1,\ldots, V_n)$ in $\mathcal F_n$, 
$$\Phi(P)=\lambda_1V_1+\lambda_2 V_2+\ldots+\lambda_n V_n,$$
\noindent where, for $k=1,\ldots, n$,
$$\lambda_k=\frac{g([d_{\rho^{k-1}(i)\rho^{k-1}(j)}])}{g([d_{ij}])+g([d_{\rho(i)\rho(j)}])+\ldots+g([d_{\rho^{n-1}(i)\rho^{n-1}(j)}])} $$

\noindent and $\rho$ is the permutation defined in Equation \eqref{eq.rho}.

\end{itemize}

\end{theo}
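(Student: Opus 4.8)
The plan is to prove both directions of the equivalence separately, and the main work will lie in showing that the two defining properties of a center (commuting with similarities and invariance under relabellings) translate precisely into the symmetry and homogeneity conditions on $g$, together with the prescribed barycentric formula.

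First I would prove the direction $(b)\Rightarrow(a)$, which should be the more routine half. Given a center function $g$, I define $\Phi$ by the stated formula and verify the two axioms of Definition \ref{defi}. For property (1), I would observe that the coefficients $\lambda_k$ depend only on the distance matrix $[d_{ij}]$, hence only on the congruence class of $P$; under a similarity with ratio $r$, every $d_{ij}$ scales by $r$, so by homogeneity (ii) the numerator and each summand in the denominator scale by $r^m$, and the ratio $\lambda_k$ is unchanged. Since the $\lambda_k$ sum to $1$, the point $\sum_k \lambda_k V_k$ is an affine (barycentric) combination of the vertices, and such combinations commute with any affine map, in particular with similarities. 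For property (2), I would check invariance under the two generators $\rho$ and $\sigma$ of $D_n$: relabelling by $\rho$ cyclically permutes both the vertices and the weights $\lambda_k$ (the cyclic shift in the $\rho^{k-1}$ exponents reindexes the sum), leaving $\Phi$ fixed, while invariance under $\sigma$ follows from the symmetry condition (i) on $g$. Care is needed here to check that the denominator is itself invariant under all of $D_n$, so that only the numerators need to be tracked.

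Next I would prove the harder direction $(a)\Rightarrow(b)$. Starting from a center $\Phi$, I would first use property (1) to reduce to a normalized representative of each similarity class, and express $\Phi(P)$ as an affine combination $\sum_k \lambda_k V_k$ of the vertices; the fact that $\Phi$ commutes with \emph{all} similarities (including translations, rotations and scalings) is what forces the output to be an affine-invariant combination with weights summing to $1$ and depending only on the shape, i.e.\ on the distance matrix. The key step is then to manufacture the scalar function $g$ from the first weight $\lambda_1$, essentially setting $g([d_{ij}])$ proportional to $\lambda_1$ on each congruence class (up to a common homogeneous factor), and to verify that the remaining weights $\lambda_2,\ldots,\lambda_n$ are recovered by the $\rho$-shifts as in the formula. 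Here the relabelling invariance of $\Phi$ under $\rho$ is exactly what guarantees this consistency, and invariance under $\sigma$ yields the symmetry condition (i); homogeneity (ii) can be arranged by choosing $m=0$ (or any convenient degree) since the weights are already scale-invariant.

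The main obstacle I anticipate is the construction in $(a)\Rightarrow(b)$ showing that the weights $\lambda_k$ are genuinely well defined as functions of the distance matrix and not merely of a chosen representative: one must rule out degenerate configurations where the affine representation $\Phi(P)=\sum \lambda_k V_k$ is not unique (for instance flat $n$-gons, whose vertices are affinely dependent, so the barycentric coordinates are not determined). The cleanest route is to argue uniqueness on the open dense set of non-degenerate $n$-gons and then invoke the closure and relabelling hypotheses on $\mathcal F_n$, or to fix a canonical choice of weights compatible with the $D_n$-symmetry; I would also need to confirm that the common denominator can be taken nonzero after a suitable normalization of $g$, which is where the freedom ``$g$ may not be unique'' in the statement becomes essential.
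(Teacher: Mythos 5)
Your direction $(b)\Rightarrow(a)$ is fine and matches the paper, which disposes of it by citing Theorems 7 and 10 of \cite{PS.C}. The genuine gap is in $(a)\Rightarrow(b)$, where your plan names the difficulty but does not solve it. First, you locate the non-uniqueness of the representation $\Phi(P)=\sum_k\lambda_kV_k$ in ``degenerate configurations'' such as flat $n$-gons and propose to ``argue uniqueness on the open dense set of non-degenerate $n$-gons.'' For $n\geq 4$ this is wrong at the outset: any $n\geq 4$ points in the plane are affinely dependent, so the affine weights are non-unique for \emph{every} $n$-gon, not just flat ones; there is no set on which uniqueness holds. Second, the density-and-extension fallback cannot work even where the ambient non-uniqueness is not an issue, because Definition \ref{defi} imposes no continuity on $\Phi$; a center need not be continuous, so nothing lets you propagate a formula from a dense subfamily of $\mathcal F_n$ to all of it (and $\mathcal F_n$ could itself consist entirely of flat polygons). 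Your remaining alternative --- ``fix a canonical choice of weights compatible with the $D_n$-symmetry'' --- is precisely the missing content of the proof, not a route to it.

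The paper supplies the construction your outline lacks: for each index $k$ such that the three \emph{consecutive} vertices $V_{k-1},V_k,V_{k+1}$ are not collinear (at least one such $k$ exists when $P$ is not flat), the point $\Phi(P)$ has \emph{unique} barycentric coordinates $\mu_{k,k-1},\mu_{kk},\mu_{k,k+1}$ with respect to that triangle; padding with zeros and averaging over the $r$ admissible indices produces canonical weights $\frac{1}{r}\sum_k\mu_{km}$, and one sets $g=\frac{1}{r}\sum_k\mu_{k1}$. Congruence-invariance of the $\mu$'s makes $g$ a function of $[d_{ij}]$; relabelling invariance of $\Phi$ under $\rho$ identifies the $m$-th weight with $g([d_{\rho^{m-1}(i)\rho^{m-1}(j)}])$, and since these weights automatically sum to $1$, the denominator in (b) equals $1$ and the normalization worry you raise at the end does not arise; invariance under $\sigma$ gives the symmetry condition, and scale-invariance of the $\mu$'s gives homogeneity with exponent $0$, as you correctly anticipated. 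The flat case is then handled directly rather than by limiting arguments: replace non-collinearity by the condition that $V_{k-1},V_k,V_{k+1}$ do not all coincide, and restore uniqueness of the $\mu$'s by additionally imposing $\mu_{k,k-1}=\mu_{k,k+1}$; the all-vertices-coincident case is trivial with $g\equiv 1$. Without this local-triangle averaging device, your proposal cannot be completed as stated.
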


\begin{proof} All subindices will be considered modulo $n$ throughout this proof. The implication $(b)\Rightarrow (a)$ follows from a combination of Theorems 7 and 10 in \cite{PS.C}. Let us prove that $(a)\Rightarrow(b)$. To do so, we will determine such a function $g$. Let $P=(V_1,\ldots,V_n)$ be a representative of the equivalence class $[d_{ij}]_{1\leq i,j\leq n}$.

Let us suppose that $P$ is not flat. This implies that there exists at least one value $k$, $1\leq k\leq n$, satisfying

\noindent \textbf{Property (*):} the three consecutive vertices $V_{k-1},V_k,V_{k+1}$  are not collinear.

For every $k$ satisfying the Property (*),  there is a unique triple of real numbers $\mu_{k,k-1},\mu_{kk}, \mu_{k,k+1}$ such that 
\begin{equation} \label{eq.mus} \mu_{k,k-1}+\mu_{kk}+\mu_{k,k+1}=1\text{ and }\Phi(P)=\mu_{k,k-1}V_{k-1}+\mu_{kk}V_k+\mu_{k,k+1}V_{k+1}\end{equation} 

Let us set $\mu_{km}=0$ for $1\leq m\leq n$, $m\neq k-1,k,k+1$ so that
$$\Phi(P)=\mu_{k1}V_1+\ldots+\mu_{kn}V_n .$$

\noindent For every $k$ not satisfying the Property (*), let us set $\mu_{km}=0$, for $m=1,\ldots, n$. Suppose that there exist $r$ subindices satisfying the Property (*). The numbers $\mu_{k1}$ can be viewed as functions of the vertices $V_1,\ldots,V_n$. We define 
$$g(V_1,\ldots,V_n)=\frac{1}{r}\sum_{k=1}^n\mu_{k1}.$$

First, notice that $g$ assigns the same value to every element in the same equivalence class in $\mathcal T_n$, since, for every plane congruence $T$, the coefficients $\mu_{k,k-1},\mu_{kk},\mu_{k,k+1}$ appearing in Equation \eqref{eq.mus} and the ones satisfying
$$\mu_{k,k-1}+\mu_{kk}+\mu_{k,k+1}=1\text{ and }\Phi(T(P))=\mu_{k,k-1}T(V_{k-1})+\mu_{kk}T(V_k)+\mu_{k,k+1}T(V_{k+1})$$
coincide. So, with an abuse of notation, we will write  $g(V_1,\ldots,V_n)=g([d_{ij}])$.

Second, note that
$$\Phi(P)=\left(\frac{1}{r}\sum_{k=1}^n\mu_{k1}\right)V_1+\ldots+\left(\frac{1}{r}\sum_{k=1}^n\mu_{kn}\right)V_n $$

\noindent and the sum of the coefficients of this affine combination of the vertices equals 1. Moreover,  for $m=2,\ldots, n$,  $(\frac{1}{r}\sum_{k=1}^n\mu_{km})=g([d_{\rho^{m-1}(i)\rho^{m-1}(j)}])$.

Third, note that this function $g$ is a $n$-gon center function. It is easy to see that this function satisfies the symmetry condition. To prove the homogeneity condition it suffices to see that, as functions of the distances in $[d_{ij}]$, for every $\lambda\in\mathbb R_{>0}$, $\mu_{km}([\lambda d_{ij}])=\mu_{km}([d_{ij}])$. So $g([\lambda d_{ij}])=g([ d_{ij}])$.

Finally, we need to consider the special case in which $P$ is flat. 

\begin{itemize} 

\item If all the vertices coincide, then we can easily check that $\Phi(P)$ must coincide also with the vertices. So the statement is trivial: we can take $g([d_{ij}])=1$.

\item If all the vertices are collinear but do not coincide in the same point, then $\Phi(P)$ is also collinear to them. 

Property (*) should be replaced by: the three consecutive vertices $V_{k-1}$,$V_k$ and $V_{k+1}$ do not coincide. We have guaranteed that there exists at least one value $k$, $1\leq k\leq n$ satisfying this new property.

Now, in the paragraph under Property (*), the numbers $\mu_{k,k-1},\mu_{kk},\mu_{k,k+1}$ are not in general unique. But they are unique if we, furthermore, impose the extra condition of $\mu_{k,k-1}=\mu_{k,k+1}$. The rest of the proof remains as in the non-flat case.

\end{itemize}

\end{proof}

We call the map $\varphi:\widetilde{\mathcal F_n}\to \mathbb{R}^n$, $\varphi([d_{ij}])= (\lambda_1([d_{ij}]),\ldots,\lambda_n([d_{ij}]))$,
a \textbf{coordinate map} (it is not unique in general). Here $[d_{ij}]$ is the distance matrix corresponding to $P$ and $(\lambda_1(d_{ij}),\ldots,\lambda_n(d_{ij}))$ is the corresponding set of coefficients of the affine combination from statement (b) in the previous theorem. Let us recall that the tuples $$(\lambda_1([d_{ij}]),\ldots,\lambda_n([d_{ij}]))$$ are not strictly coordinates for $n>3$ since two different tuples may correspond to the same point.

Some examples of centers appear in Section 5 of \cite{PS.C} (together with their $n$-gon center functions), some more in \cite{M} (for $n=4$ with a similar notation to the one used here) and many more  in the website \cite{VT} (in particular for $n=4$ but without the information about the center function).

For completeness we include two examples. The first one is a particularly simple but artificial example which is theoretically useful. The second one, the diagonal crosspoint of a quadrigon, only makes sense for the case $n=4$. We have found a surprising lack of satisfactory formulas describing the diagonal crosspoint of a quadrigon in terms of its elements, so we hope that the one presented here is of its own interest.

\begin{figure}[h!]
\includegraphics[width=0.9\textwidth]{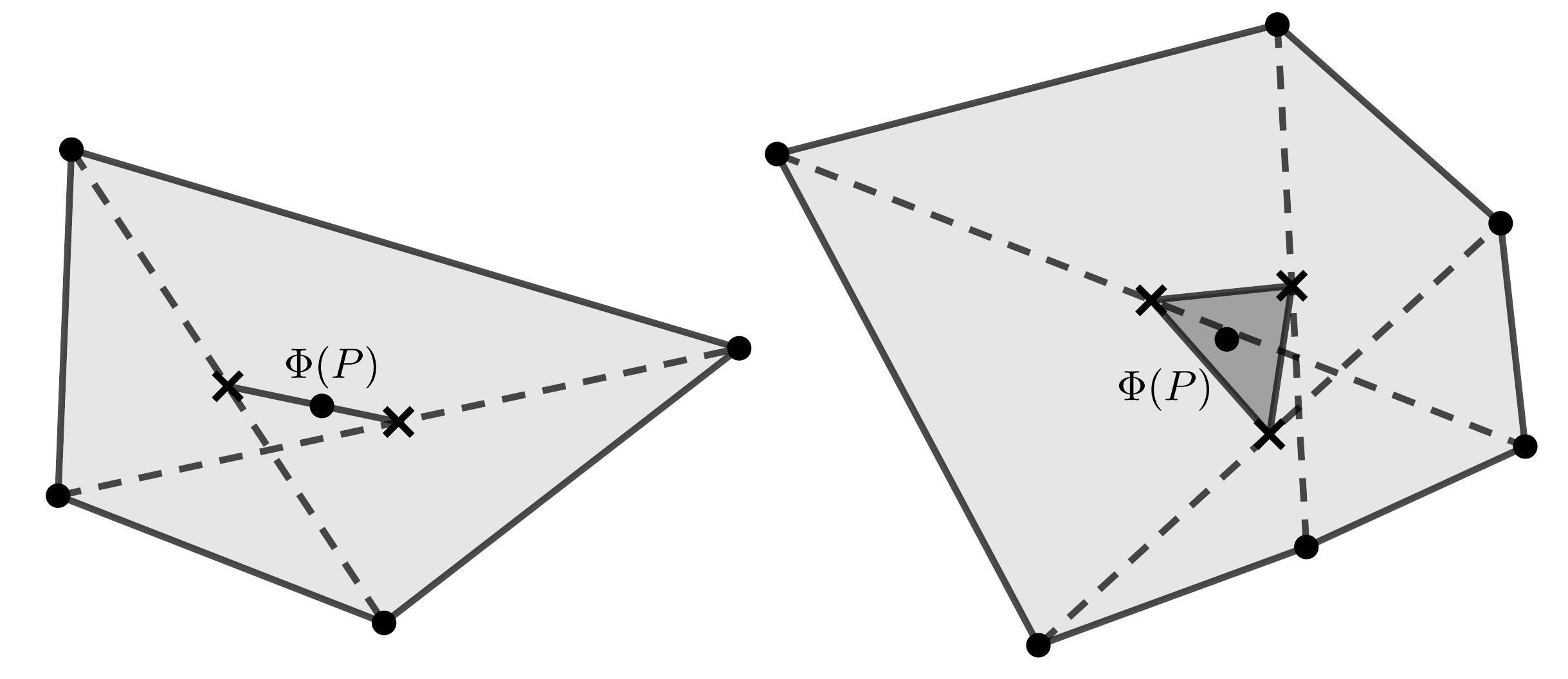}
\label{figure.simplecenter}
\caption{Here we can see the simple center $\Phi(P)$ for a tetragon (on the left) and for an hexagon (on the right). The midpoints of each diagonal are marked with the symbol $\times$.}
\end{figure}

\begin{exam} Let $n$ be even. We define the \textbf{simple center} of a $n$-gon to be
$$\Phi_S(P)=\frac{1}{d_{1,\frac{n}{2}+1}+\ldots+d_{n,\frac{n}{2}}}(d_{1,\frac{n}{2}+1}V_1+\ldots+d_{n,\frac{n}{2}}V_n) \, . $$

\noindent This point is an affine combination of the midpoints of the diagonals between opposite points in $P$, where the coefficients or this new affine combination are proportional to the lengths of the corresponding diagonals (see Figure 1).

\end{exam}

\begin{exam} Let $\mathcal F_4$ be the set of convex quadrigons with no three of their vertices collinear. The function $\Phi:\mathcal F_4\to\mathbb R^2$ given by
$$\Phi(P)=\frac{g(d_{ij})V_1+g(d_{\rho(i)\rho(j)})V_2+g(d_{\rho^2(i)\rho^2(j)})V_3+g(d_{\rho^3(i)\rho^3(j)})V_4}{g(d_{ij})+g(d_{\rho(i)\rho(j)})+g(d_{\rho^2(i)\rho^2(j)})+g(d_{\rho^3(i)\rho^3(j)})},$$
\noindent where
$$g([d_{ij}])=\sqrt{4d_{34}^2d_{24}^2-\left(d_{34}^2+d_{24}^2-d_{23}^2\right)^2}+\sqrt{4d_{23}^2d_{24}^2-\left(d_{23}^2+d_{24}^2-d_{34}^2\right)^2},$$
is a tetragon center function that maps each tetragon $P=(V_1,\ldots, V_n)$ to the crosspoint of its diagonals.

Let us denote by $a,b,c,d$ the distances between $\Phi(P)$ and each of the vertices $V_1,V_2,V_3,V_4$ respectively, and by $\alpha$, $\beta$ the angles $\angle(V_3,V_4,\Phi(P))$ and $\angle(V_4,\Phi(P),V_3)$ as shown in the following diagram.

\begin{center}
\includegraphics[width=7cm]{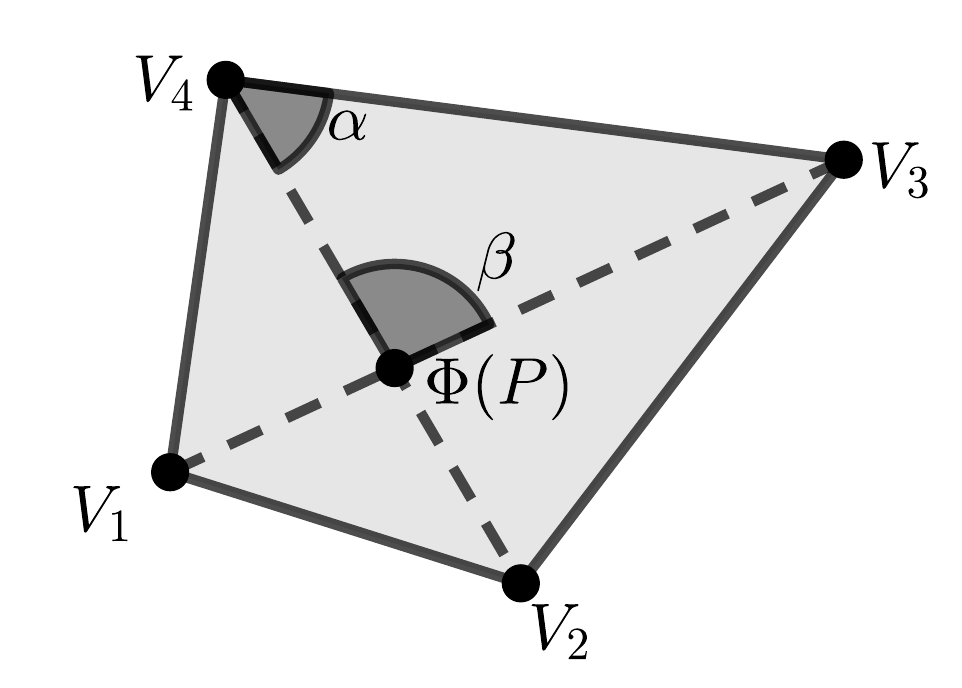}
\end{center}

Using the \emph{Law of Sines} and the \emph{Law of Cosines} in the trigons $(V_3,V_4,\Phi(P))$ and $(V_3,V_4,V_2)$ respectively we get
\begin{eqnarray*}
c&=&\frac{\sin(\alpha)}{\sin(\beta)}d_{34} =\frac{\sqrt{1-\cos^2(\alpha)}}{\sin(\beta)}d_{34}=\frac{\sqrt{1-\left(\frac{d_{34}^2+d_{24}^2-d_{23}^2}{2d_{34}d_{24}}\right)^2}}{\sin(\beta)}d_{34}\\
&=&\frac{\sqrt{4d_{34}^2d_{24}^2-\left(d_{34}^2+d_{24}^2-d_{23}^2\right)^2}}{2d_{24}\sin(\beta)} .
\end{eqnarray*}

But this expression is not suitably symmetric in the subindices $2$ and $4$, so we need to repeat the same argument for the trigons $(V_3,V_2,\Phi(P))$ and $(V_3,V_2,V_4)$ and then take the average to get
$$
c=\frac{\sqrt{4d_{34}^2d_{24}^2-\left(d_{34}^2+d_{24}^2-d_{23}^2\right)^2}+\sqrt{4d_{23}^2d_{24}^2-\left(d_{23}^2+d_{24}^2-d_{34}^2\right)^2}}{4d_{24}\sin(\beta)} .
$$

We can obtain analogous formulas for $a,b$ and $d$. Finally note that
$$\Phi(P)=\frac{c}{d_{13}}V_1+\frac{a}{d_{13}}V_3=\frac{d}{d_{24}}V_2+\frac{b}{d_{24}}V_4 ,$$
\noindent so the desired formula follows from taking the average
$$\Phi(P)=\frac{1}{2}\left(\frac{c}{d_{13}}V_1+\frac{d}{d_{24}}V_2+\frac{a}{d_{13}}V_3+\frac{b}{d_{24}}V_4\right) .$$

\end{exam}

To understand how the centers of a fixed $n$-gon are distributed in the plane, we need the following result:

\begin{lemm} \label{lemma.d} Let $\Phi_A,\Phi_B,\Phi_C$ be three $n$-gon centers defined for the same family $\mathcal F_n$. Let $\mu_A,\mu_B,\mu_C\in\mathbb R$ such that $\mu_A+\mu_B+\mu_C=1$. Then the function $\Phi:\mathcal F_n\to\mathbb R^2$ defined as the affine combination
$$\Phi=\mu_A\Phi_A+\mu_B\Phi_B+\mu_C\Phi_C $$

\noindent is also a $n$-gon center.

\end{lemm}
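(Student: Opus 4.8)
The plan is to verify directly that the affine combination $\Phi$ satisfies the two defining properties of a center from Definition \ref{defi}, namely invariance under relabellings and commutation with similarities. The governing observation throughout will be that a plane similarity is an \emph{affine} transformation, together with the fact that the hypothesis $\mu_A+\mu_B+\mu_C=1$ makes $\Phi(P)$ an \emph{affine} combination of the three points $\Phi_A(P),\Phi_B(P),\Phi_C(P)$. This is precisely what allows the translation part of a similarity to be transported correctly, and it is the reason the statement is phrased for affine combinations rather than arbitrary linear ones.

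First I would dispose of the invariance under relabellings, which is immediate and purely pointwise. If $P$ and $Q$ differ only in their labelling, then $\Phi_A(P)=\Phi_A(Q)$, $\Phi_B(P)=\Phi_B(Q)$ and $\Phi_C(P)=\Phi_C(Q)$, since each of $\Phi_A,\Phi_B,\Phi_C$ is a center. Substituting into the definition of $\Phi$ gives $\Phi(P)=\Phi(Q)$ at once.

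The heart of the argument is property (1). Let $P,Q\in\mathcal F_n$ be similar, say $Q=T(P)$ where $T$ denotes the similarity applied vertex by vertex, and write $T$ in affine form $T(x)=Lx+v$ with $L$ a scaled orthogonal matrix and $v\in\mathbb R^2$. Since each of $\Phi_A,\Phi_B,\Phi_C$ commutes with similarities, we have $\Phi_A(Q)=T(\Phi_A(P))$ and likewise for $B$ and $C$. I would then compute
\[
\Phi(Q)=\mu_A\,T(\Phi_A(P))+\mu_B\,T(\Phi_B(P))+\mu_C\,T(\Phi_C(P)),
\]
expand each term via $T(\cdot)=L(\cdot)+v$, and collect. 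The linear parts factor through $L$, while the translation part accumulates the coefficient $\mu_A+\mu_B+\mu_C$, giving
\[
\Phi(Q)=L\bigl(\mu_A\Phi_A(P)+\mu_B\Phi_B(P)+\mu_C\Phi_C(P)\bigr)+(\mu_A+\mu_B+\mu_C)\,v=L\,\Phi(P)+v=T(\Phi(P)),
\]
where the final equalities use the hypothesis $\mu_A+\mu_B+\mu_C=1$. This is exactly the assertion that $\Phi$ commutes with $T$.

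The only point requiring real care is the handling of the translation vector $v$: had $\mu_A+\mu_B+\mu_C$ differed from $1$, the vector $v$ would have been scaled by the wrong factor and $\Phi$ would fail to commute with translations, hence with general similarities. Once this observation is in place the remaining verification is routine, and combining the two properties establishes that $\Phi$ is an $n$-gon center.
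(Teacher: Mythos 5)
Your proof is correct and takes the same route as the paper, whose entire proof is the one-line assertion that $\Phi$ ``obviously commutes with similarities and with relabellings''; you have simply written out the verification in full, correctly identifying that the condition $\mu_A+\mu_B+\mu_C=1$ is what makes the translation part of a similarity come out right. No discrepancy to report.
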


\begin{proof} $\Phi$ obviously commutes with similarities and with relabellings.
\end{proof}

As a consequence, if a given $n$-gon  has two non-coincident centers, then every point in the line passing through them is a center. This is not a contradiction with the statement in \cite[Section 5]{K.FE}. Recall that, for us, given a $n$-gon, a point $X$ in the plane is a center if there exists a center $\Phi$ such that $\Phi(P)=X$. In turn, if a given $n$-gon has three affinely independent centers, then every point in the plane is a center. This reasoning leads to the following:

\medskip

\noindent \textbf{Question:} \emph{When does a $n$-gon have two centers which are not coincident? When does a $n$-gon have three centers which are affinely independent?}

\medskip

This question will be discussed in Section \ref{sect.symmetry}, but the answer is ``in most cases''.

Finally, we should keep in mind that we have provided a theoretical concept of center, but not every center is interesting. We are mostly interested in centers with geometric meaning.

\section{Review of central points and lines for triangles} \label{section.lineskimberling}

The original definition of center function provided by Kimberling only applied for triangles. In his  approach,  the center is recovered from the center function using trilinear coordinates. In contrast, the approach in the previous section and in \cite{PS.C} (applied to the case $n=3$) uses barycentric coordinates. In \cite{PS.C} there is a discussion on how to change from one point of view to the other, but we also include a brief summary here.

For $n=3$, the elements in $\mathcal T_3$ can be identified with triples $(a,b,c)$ corresponding to the sidelengths.
If $[t_1:t_2:t_3]$ are the trilinear coordinates of a point in a trigon with sidelengths $(a,b,c)$ then the corresponding barycentric coordinates are $[at_1:bt_2:ct_3]$. So Kimberling's definition of triangle center function coincides with our definition of $n$-gon center function for $n=3$, but the geometric interpretation (that is, the center $\Phi$ corresponding to a given center function)  is not the same. For Kimberling, the point corresponding to a center function $g$ and a trigon $P$ is the point $X$ with trilinear coordinates $[g(a,b,c):g(b,c,a):g(c,a,b)]$ with respect to $P$ and for us it is a different point $Y$ with barycentric coordinates $[g(a,b,c):g(b,c,a):g(c,a,b)]$ with respect to the vertices of $P$. For the rest of this section (and only for this section), for a given center function $g$, we will use the notation $\Phi^K:\mathcal F_3\subset \mathcal P_3\to \mathcal R^2$ for the function that maps each triangle to its center, according to Kimberling's interpretation.

The concept of central line was introduced in Kimberling's paper \cite{K.CL}, which we use as the main reference for this section. Kimberling defines a \textbf{triangle central line} as a line passing through two triangle centers. Let $\mathcal L$ denote the set of lines in $\mathbb R^2$.
\begin{defi}
A triangle central line is a function $\Psi^K:\mathcal F_3\to \mathcal L$ such that there are two centers (in the sense of Kimberling) $\Phi_1^K,\Phi_2^K$ satisfying that for every element $P\in\mathcal F_3$, $\Psi^K(P)$ passes through $\Phi_1^K(P)$ and $\Phi_2^K(P)$.
\end{defi}

Note that $\mathcal F_3$ should not contain elements for which $\Phi^K_1(P)$ and $\Phi^K_2(P)$ coincide.

Let us recall that, given a triangle, the trilinear coordinates $[t_1:t_2:t_3]$  of the points in a line $l$ satisfy an equation of the type
$$At_1+Bt_2+Ct_3=0 .$$

\noindent The author proves in \cite[Theorem 3]{K.FE}  that a line is a central line if and only if it has an equation $At_1+Bt_2+Ct_3=0 $ where $[A:B:C]$ are the coordinates of a triangle center. Moreover, there is a geometric relation between a triangle center and the central line corresponding to this same center function. Recall that given a triangle $ABC$ and a point $X$ not lying on the sides, the isogonal conjugate of $X$ with respect to $ABC$, denoted by $X'$, is constructed by reflecting the lines $XA$, $XB$, and $XC$ about the angle bisectors of $A$, $B$, and $C$ respectively. These three reflected lines concur at the isogonal conjugate of $X$. On the other hand the trilinear polar of $X$ is the axis of perspectivity of the cevian triangle of $X$ and the triangle $ABC$.

\begin{figure}[h!]
\begin{center}
\includegraphics[width=6cm]{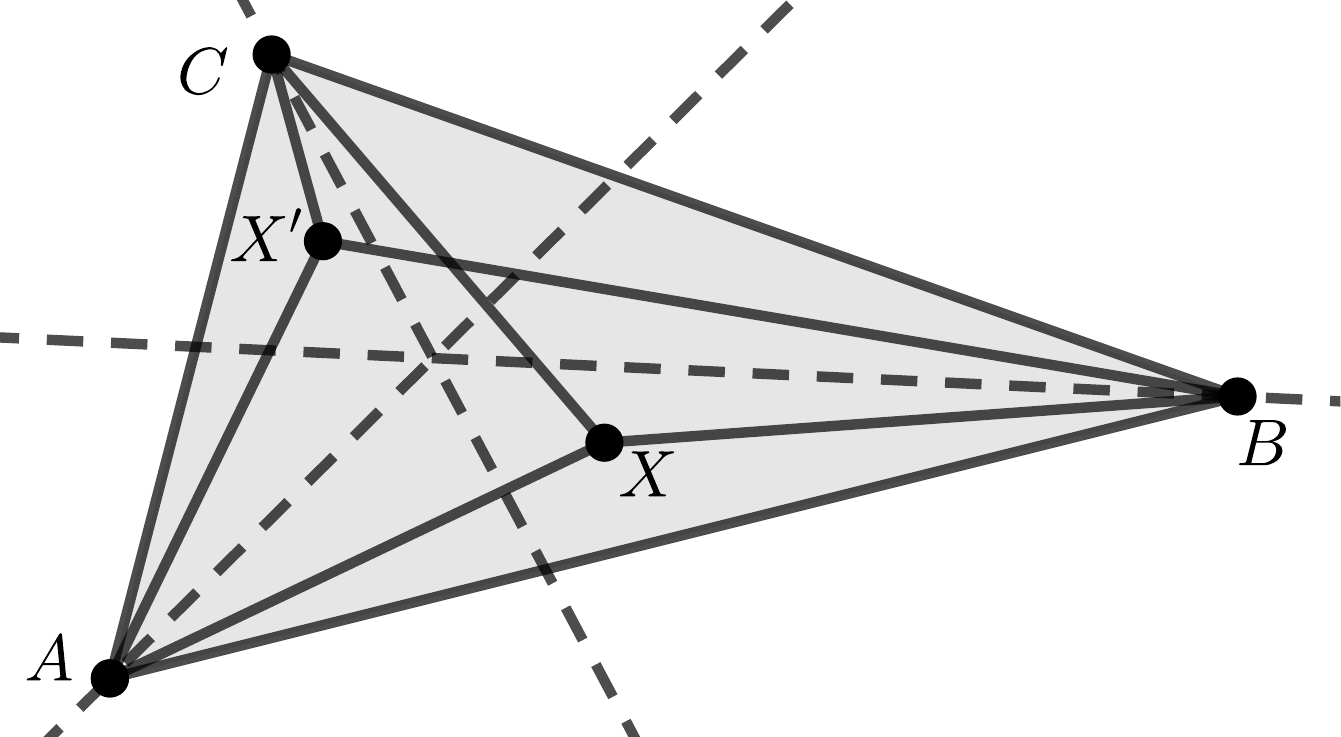}
\end{center}
\end{figure}

\begin{rem} Given a triangle center with trilinear coordinates $[x_0:y_0:z_0]$ not lying on the sides of the triangle (assume that none of the coordinates is 0) its \textbf{isogonal conjugate} is the center with trilinear coordinates $[\frac{1}{x_0}: \frac{1}{y_0}: \frac{1}{z_0}]$ and its \textbf{trilinear polar} is the line with equation
$$\frac{x}{x_0}+ \frac{y}{y_0}+ \frac{z}{z_0}=0 .$$

So the map that takes each triangle center $[x_0:y_0:z_0]$ to a central line $\{x_0x+y_0y+z_0z=0\}$ sends the point to the trilinear polar of its isogonal conjugate.

\end{rem}

\begin{rem} \label{theo.main3} 
Kimberling's approach to central lines (and also to central points) is close in spirit to Theorem \ref{theo.interpretation}. More precisely, suppose that we have a map $\Psi^K:\mathcal F_3\subset \mathcal P_3\to \mathcal L$. Then, the following three conditions are equivalent (see \cite{K.CL}):

\begin{itemize}

\item[(a)] $\Psi^K$ is a central line.

\item[(b)] If $P,P'$ are two similar triangles, then $\Psi^K(P)$ and $\Psi^K(P')$ are related via this same similarity.

\item[(c)] There exists a triangle center function $f$ such that for every triangle of reference, the equation of $\Psi^K(P)$ is of the type
$$f(a,b,c)t_1+f(b,c,a)t_2+f(c,a,b)t_3=0 $$

\noindent for some triangle center function $f$.

\end{itemize}

\end{rem}

\section{Some remarks concerning the geometry of affine combinations}

If we have $n$ points in $\mathbb R^2$,  we can identify each point in their span with a set of coefficients $(x_1,\ldots, x_n)$ such that $x_1+\ldots+x_n=1$, that is,
$$x_1V_1+\ldots+x_nV_n\sim(x_1,\ldots, x_n).$$

\noindent But this coefficients are not coordinates, in the sense that they are not unique.

\setcounter{figure}{1}
\begin{figure}[h!] 
\begin{center}
\includegraphics[width=4cm]{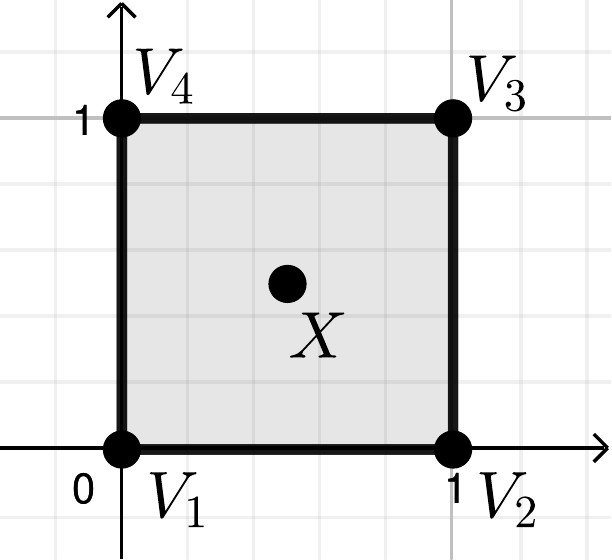}
\label{cuad}
\captionof{figure}{}
\end{center}
\end{figure}

\begin{exam} \label{exam.malditocuadrado} In Figure 2 
the point $X$ can be writen as
$$\begin{array}{l l} X&=\frac{1}{4}V_1+\frac{1}{4}V_2+\frac{1}{4}V_3+\frac{1}{4}V_4\\
\\
&=\frac{1}{6}V_1+\frac{1}{3}V_2+\frac{1}{6}V_3+\frac{1}{3}V_4.\end{array} $$

The expression $X=-V_1+\frac{1}{2}V_2+\frac{1}{2}V_4$ is not accepted in our setting since $-1+\frac{1}{2}+0+\frac{1}{2}=0$.

\end{exam}

The following lemma explains how we can describe lines using linear equations in the coefficients of the corresponding affine combination.

\begin{lemm} \label{lemm.n-2} Let $P\in\mathcal P_n$. Consider a system of $(n-1)$ independent compatible linear equations of the type
\begin{equation} \label{eq.n-2}\begin{bmatrix}a_{11} & \hdots & a_{1n} \\ \vdots & & \vdots \\ a_{n-2,1} & \hdots & a_{n-2,n} \\ 1 & \ldots & 1\end{bmatrix}\begin{bmatrix}x_1\\ \vdots \\ x_n\end{bmatrix}=\begin{bmatrix} 0 \\ \vdots \\ 0 \\ 1 \end{bmatrix}, \end{equation}

\noindent where $a_{ij}\in\mathbb{R}$. The set of points $X=x_1V_1+\ldots+x_nV_n $ such that $x_1,\ldots, x_n$ is a solution of the linear system is a line or a point.
\end{lemm}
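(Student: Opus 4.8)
The plan is to recognize the two maps at play and compose them, reducing everything to elementary linear algebra. First I would observe that the solution set of the linear system \eqref{eq.n-2} inside $\mathbb{R}^n$ is an affine line. Indeed, the $(n-1)\times n$ coefficient matrix has rank $n-1$, since the $n-1$ equations are independent by hypothesis; hence the associated homogeneous system has a kernel of dimension $n-(n-1)=1$. As the system is compatible, there is a particular solution $x^{0}=(x_1^0,\ldots,x_n^0)$, and the full solution set is the affine line
$$S=\{\,x^{0}+t\,v \;:\; t\in\mathbb{R}\,\},$$
where $v=(v_1,\ldots,v_n)$ is any nonzero vector spanning the kernel of the coefficient matrix. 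Note that $v$ lies in the kernel of the last row as well, so $v_1+\ldots+v_n=0$.

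Next I would introduce the map $\Pi:\mathbb{R}^n\to\mathbb{R}^2$ defined by $\Pi(x_1,\ldots,x_n)=x_1V_1+\ldots+x_nV_n$. Writing $M=[\,V_1\,|\,\cdots\,|\,V_n\,]$ for the $2\times n$ matrix whose columns are the vertices of $P$, we have $\Pi(x)=Mx$, so $\Pi$ is linear. The set of points $X=x_1V_1+\ldots+x_nV_n$ described in the statement, as $x$ ranges over the solutions of the system, is precisely the image $\Pi(S)$.

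Finally, since $\Pi$ is linear,
$$\Pi(S)=\{\,\Pi(x^{0})+t\,\Pi(v)\;:\;t\in\mathbb{R}\,\}.$$
If $\Pi(v)\neq 0$ this is a line through $\Pi(x^{0})$ with direction $\Pi(v)$; if $\Pi(v)=0$ (which happens exactly when the vertices satisfy the affine dependence $\sum_i v_iV_i=0$) it reduces to the single point $\Pi(x^{0})$. In either case $\Pi(S)$ is a line or a point, as claimed. There is no genuine obstacle here: the only step requiring care is the dimension count, namely that the word \emph{independent} in the hypothesis forces the coefficient matrix to have rank exactly $n-1$, and hence its kernel to be exactly one dimensional. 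Once that is in place, the conclusion follows immediately from the linearity of $\Pi$.
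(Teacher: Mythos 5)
Your proof is correct, and it rests on the same underlying mechanism as the paper's — realizing the point set as the image of a line in $\mathbb{R}^n$ under a map to the plane — but your implementation is more elementary and more explicit. The paper lifts the vertices to affinely independent points $\widetilde V_1,\ldots,\widetilde V_n$ in $\mathbb{R}^n$, interprets the solutions $(x_1,\ldots,x_n)$ as genuine barycentric coordinates there (so that the system cuts out a line in $\mathbb{R}^n$), and concludes by noting that the projection $(x_1,\ldots,x_n)\mapsto(x_1,x_2)$, which sends $\widetilde V_i$ to $V_i$, maps that line to a line or a point. You dispense with this geometric scaffolding: you work directly in coefficient space, use rank--nullity (rank $n-1$ by independence, hence a one-dimensional kernel, plus compatibility) to see that the solution set is an affine line $x^0+tv$, and push it forward by the linear evaluation map $\Pi(x)=Mx$ with $M=[\,V_1\,|\cdots|\,V_n\,]$. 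Your version buys two things: the dimension count that the paper compresses into the phrase ``correspond to a line in $\mathbb{R}^n$'' is spelled out, and you characterize exactly when the image degenerates to a point, namely when the kernel direction satisfies the affine dependence $\sum_i v_iV_i=0$ among the vertices (note $\sum_i v_i=0$, so this is indeed an affine dependence) — information the paper's proof does not surface. What the paper's formulation buys is coherence with the viewpoint used throughout the article, in which the coefficients of affine combinations are treated as would-be barycentric coordinates that become honest coordinates only after lifting to general position.
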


\begin{proof} Consider a set of points $\widetilde V_1,\ldots,\widetilde V_n$ in $\mathbb R^n$ such that they are affinely independent and, for $i=1,\ldots, n$, the projection $\mathbb{R}^n\to\mathbb R^2$, $(x_1,x_2,\ldots, x_n)\mapsto (x_1,x_2)$, takes $\widetilde V_i$ to $V_i$.

The equations in the linear system described above correspond to a line in $\mathbb R^n$ if we interpret the solutions $x_1,\ldots, x_n$ as  barycentric coordinates (in this setting they are actually coordinates) with respect to the vertices $\widetilde V_1,\ldots, \widetilde V_n$. The projection of a line is either a line or a point.
\end{proof}

 \begin{rem} A point may be described by two different sets of coefficients, one of them satisfying a given system of linear equations and the other one not. 
 For example, in the setting of Example \ref{exam.malditocuadrado},  the coefficients $(\frac{1}{4},\frac{1}{4},\frac{1}{4},\frac{1}{4})$ satisfy the system of linear equations
\begin{equation}\label{eq.rectaejemplo}
\begin{cases}
x_1-x_2=0 ,\\ x_2-x_3=0 ,\\ x_1+x_2+x_3+x_4=1,
\end{cases} 
\end{equation}
but the coefficients $(\frac{1}{6},\frac{1}{3},\frac{1}{6},\frac{1}{3})$ do not and both sets of coefficients correspond to the same point $X$.
\end{rem}

Then the following problem arises: given a point in the plane and the equations of a line, how can we decide whether or not  the point belongs to the line? This question is answered with the following result.

\begin{lemm} Given a system of linear equations of the type \eqref{eq.n-2}, to decide whether or not the point
$$
X=\lambda_1V_1+\ldots+\lambda_nV_n \, , \text{ where }  \lambda_1+\ldots+\lambda_n=1, 
$$
lies on the corresponding line, we only need to check if the linear system obtained by adding the two linear equations 
\begin{equation}\label{eq.hay2}(x_1-\lambda_1)V_1+\ldots+(x_n-\lambda_n)V_n=0\end{equation}
\noindent has a solution.

\end{lemm}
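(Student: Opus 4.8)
The plan is to reduce the question to the consistency of a linear system by unwinding the very definition of the line supplied by Lemma \ref{lemm.n-2}. Recall that, by that lemma, the line $\ell$ attached to the system \eqref{eq.n-2} is not the solution set in $\mathbb R^n$ itself, but its image under the affine-combination map $(x_1,\ldots,x_n)\mapsto x_1V_1+\ldots+x_nV_n$. Consequently, a point $X$ lies on $\ell$ if and only if \emph{there exists} a coefficient vector $(x_1,\ldots,x_n)$ that simultaneously (i) solves \eqref{eq.n-2} and (ii) satisfies $x_1V_1+\ldots+x_nV_n=X$. The whole proof is the bookkeeping that turns condition (ii) into the two scalar equations hidden in \eqref{eq.hay2}.

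First I would make explicit \emph{why} one cannot simply substitute the given coefficients $(\lambda_1,\ldots,\lambda_n)$ into \eqref{eq.n-2} and test: as stressed in the preceding Remark and Example \ref{exam.malditocuadrado}, a single point $X$ admits infinitely many affine representations, and it may well happen that $X\in\ell$ while the \emph{particular} representation $(\lambda_1,\ldots,\lambda_n)$ fails to solve \eqref{eq.n-2} (exactly as $(\tfrac16,\tfrac13,\tfrac16,\tfrac13)$ fails while $(\tfrac14,\tfrac14,\tfrac14,\tfrac14)$ succeeds for the system \eqref{eq.rectaejemplo}). Thus the correct question is whether \emph{some} representation of $X$ solves \eqref{eq.n-2}, and this existential quantifier is precisely what makes an augmented linear system the right object.

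Next I would characterize all representations of $X$. Writing $X=\lambda_1V_1+\ldots+\lambda_nV_n$ with $\lambda_1+\ldots+\lambda_n=1$, a candidate $(x_1,\ldots,x_n)$ yields the same point as an affine combination exactly when $\sum_i x_iV_i=\sum_i\lambda_iV_i$, i.e. when $\sum_i(x_i-\lambda_i)V_i=0$. Read coordinate-by-coordinate in $\mathbb R^2$, this single vector identity \eqref{eq.hay2} is a pair of scalar linear equations in $x_1,\ldots,x_n$ — these are ``the two linear equations'' of the statement. Appending them to \eqref{eq.n-2} (whose last row $\sum_i x_i=1$ already forces the combination to be affine, consistently with $\sum_i\lambda_i=1$) produces one system, and I would then argue the equivalence in both directions: if the augmented system has a solution $(x_i)$, then by \eqref{eq.n-2} the point $\sum_i x_iV_i$ lies on $\ell$ and by \eqref{eq.hay2} that point equals $X$, so $X\in\ell$; conversely, if $X\in\ell$, Lemma \ref{lemm.n-2} furnishes a solution of \eqref{eq.n-2} whose image is $X$, and this solution automatically satisfies \eqref{eq.hay2}, so the augmented system is consistent.

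The routine part is the two-way implication above; the delicate points, which I would treat carefully, are two. The first is keeping straight that \eqref{eq.hay2} is genuinely two equations, not one, so that the augmented system has $n+1$ rows in $n$ unknowns and its consistency is the nontrivial condition being tested. The second is the degenerate alternative allowed by Lemma \ref{lemm.n-2}, in which the image of \eqref{eq.n-2} is a single point rather than a line; I expect this to be the main obstacle to a fully clean statement, and I would note that the same argument applies verbatim, since the equivalence ``$X$ is the point $\iff$ the augmented system is consistent'' never used that the image was one-dimensional.
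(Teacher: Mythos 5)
Your proposal is correct, and it is precisely the argument the paper has in mind: the paper states this lemma without proof, treating it as an immediate consequence of Lemma \ref{lemm.n-2}, and your write-up simply makes the omitted unwinding explicit (existential quantification over affine representations of $X$, the vector identity \eqref{eq.hay2} read as two scalar equations, and the two-way implication). Your observations about the non-uniqueness of coefficients --- matching the paper's own Example \ref{exam.malditocuadrado} --- and about the degenerate point case are exactly the right points of care, and both are handled correctly.
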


To illustrate this lemma consider the tetragon in Example \ref{exam.malditocuadrado} and suppose that we want to decide if the point $X=\frac{1}{6}V_1+\frac{1}{3}V_2+\frac{1}{6}V_3+\frac{1}{3}V_4$ belongs to the line described by Equation \eqref{eq.rectaejemplo} (we already know that the answer is affirmative). We only need to check that the system
$$
\begin{bmatrix}1 & -1 & 0 & 0 \\ 0 & 1 & -1 & 0 \\ 0 & 1 & 1 & 0 \\ 0 & 0 & 1 & 1\\ 1 & 1 & 1 & 1 \end{bmatrix} \begin{bmatrix}x_1\\ x_2\\ x_3 \\ x_4 \end{bmatrix} =\begin{bmatrix}0\\ 0 \\ \frac{1}{2}\\ \frac{1}{2}\\ 1 \end{bmatrix} 
$$
has a solution, which is the case.
In this case the solution of the system is unique, but this may not happen. For instance, for the same tetragon and the same point, if we consider the line described by the system
$$
\begin{cases}
x_2-x_4=0\, , \\x_1-x_2-x_3+x_4=0\, , \\ x_1+x_2+x_3+x_4=1\, , 
\end{cases} 
$$

\noindent then the corresponding augmented system has infinitely many solutions.

\section{$n$-gon central lines} \label{sec:lines}

Following the idea in Definition \ref{defi} we can define many ``central objects''. For instance:

\begin{itemize}
    \item \textbf{central vertices}, like, for the set of scalene trigons with non collinear vertices, the vertex corresponding to the biggest (resp. to the smallest) angle   and
    
    \item  \textbf{central conics}, like the conic passing through the vertices, for the set of pentagons admitting one.
    
\end{itemize}

Central vertices (resp. central conics) are  functions that associate to every $(V_1,\ldots, V_n)$ a point in $\{V_1,\ldots, V_n\}$ (respectively a conic in the plane) and that commute with similarities and are invariant with respect to relabellings. Note that, in particular, central vertices are centers.

In this section we are mainly interested in central lines.

\begin{defi}  \label{defi.cl} Let $\Psi:\mathcal F_n\subset \mathcal P_n\to\mathcal L$. We say that $\Psi$ is a \textbf{central line} if (1) it commutes with similarities, that is, for every two similar $n$-gons $P,Q$, the corresponding lines $\Psi(P),\Psi(Q)$ are related via the same plane transformation and (2) it is invariant with respect to relabellings, that is, if two $n$-gons $P,Q$ differ only in their relabellings, then $\Psi(P)=\Psi(Q)$.
\end{defi}

\begin{rem} More ``central objects'' can be introduced  with the corresponding modifications. For example we can define
\begin{itemize}
    
    \item \textbf{central lengths}, like the perimeter,
    
    \item \textbf{central areas}, like, for the set of convex $n$-gons, the total area or the area of the $n$-gon obtained joining the midpoints of the sides,

    \item \textbf{central vector}, like, for the set of scalene trigons with their three vertices different, the vector pointing from the centroid to any central vertex.

\end{itemize}

Central lengths and central areas are functions that associate to every $(V_1,\ldots, V_n)$ a real number that is invariant with respect to relabellings and congruences. In addition, this function is required to be homogenoeus of degree 1 and 2, respectively.

A central vector is a function that associates to every $(V_1,\ldots, V_n)$ a vector in $\mathbb R^2$. It is invariant with respect to translations and commutes with every plane linear congruence.

\end{rem}

Our definition of central line is not a direct generalization of the definition given by Kimberling, so we  will use the following notion.

\begin{defi} Let $\Psi:\mathcal F_n\subset \mathcal P_n\to\mathcal L$. We say that $\Psi$ is a \textbf{Kimberling central line} if there exists two  centers $\Phi,\Phi'$ such that  for every $P\in\mathcal F_n$ the line $\Psi(P)$ passes through $\Phi(P)$ and $\Phi'(P)$ and $\Phi(P)$ and $\Phi'(P)$ do not coincide.

\end{defi}

Examples of central lines are the line containing the largest side of a $n$-gon (defined for the set of $n$-gons with a largest side) and the \emph{Schmidt line} for pentagons (see \cite{VT}). It is easy to see that the first example is not a Kimberling line.  A counterexample can be found for $n=3$ with any isosceles trigon. A proof is not possible yet since it would require  Theorem \ref{theo.symm}, but it relies on the fact that all the centers in such a triangle should lie on its symmetry axis (as we will see below).


An example of a central line for $n=4$ that is clearly a Kimberling central lines is the \emph{Euler line} (the line passing through the \emph{quasiorthocenter} and the \emph{quasicircumcenter}, as explained in \cite{M}).

Note that, according to Lemma \ref{lemma.d}, if a $n$-gon has a Kimberling central line, then all the elements in this line are centers. We will return to this question again in Section \ref{sect.symmetry}.

Unlike what happened for trigons, our concept of central line, the concept of Kimberling central line and a nice description in terms of equations are not always equivalent. We will clarify the relation between these concepts in the following theorem.

\begin{theo} \label{theo.main} Let $\Psi:\mathcal F_n\to\mathcal L$.  Consider the following statements:

\begin{itemize}

\item[(a)] $\Psi$ is a Kimberling central line.

\item[(b)] $\Psi$ is a central line.

\item[(c)] For $1\leq i\leq n-2$, $1\leq j\leq n$ there exist  functions $a_{ij}:\mathcal F_n\to\mathbb R$ such that for each $P=(V_1,\ldots,V_n)\in\mathcal F_n$, the system
$$\begin{bmatrix}1 & \ldots & 1 \\ a_{11}(P) & \ldots & a_{1n}(P) \\ \vdots & & \vdots \\ a_{n-2,1}(P) & \ldots & a_{n-2,n}(P)\end{bmatrix}\begin{bmatrix}x_1 \\ \vdots \\ x_n \end{bmatrix}=\begin{bmatrix} 1 \\ 0 \\ \vdots \\ 0 \end{bmatrix} $$

\begin{itemize}
    \item is compatible and the matrix of coefficients has rank $n-1$,
    \item the points in $\Psi(P)$  are of the type $X=x_1V_1+\ldots+x_nV_n$,
    \item for every $\alpha\in D_n$, and using the notation $\alpha(P)=(V_{\alpha(1)},\ldots,V_{\alpha(n)})$, we have
    $$\begin{bmatrix}1 & \ldots & 1 \\ a_{11}(P) & \ldots & a_{1n}(P) \\ \vdots & & \vdots \\ a_{n-2,1}(P) & \ldots & a_{n-2,n}(P)\end{bmatrix}\begin{bmatrix}x_1 \\ \vdots \\ x_n \end{bmatrix}=\begin{bmatrix} 1 \\ 0 \\ \vdots \\ 0 \end{bmatrix}\Longrightarrow \begin{bmatrix}1 & \ldots & 1 \\ a_{11}(\alpha (P)) & \ldots & a_{1n}(\alpha (P)) \\ \vdots & & \vdots \\ a_{n-2,1}(\alpha (P)) & \ldots & a_{n-2,n}(\alpha (P))\end{bmatrix}\begin{bmatrix}x_{\alpha(1)} \\ \vdots \\ x_{\alpha(n)} \end{bmatrix}=\begin{bmatrix} 1 \\ 0 \\ \vdots \\ 0 \end{bmatrix}$$
    \item for every similarity $T$ we have
    $$\begin{bmatrix}1 & \ldots & 1 \\ a_{11}(P) & \ldots & a_{1n}(P) \\ \vdots & & \vdots \\ a_{n-2,1}(P) & \ldots & a_{n-2,n}(P)\end{bmatrix}\begin{bmatrix}x_1 \\ \vdots \\ x_n \end{bmatrix}=\begin{bmatrix} 1 \\ 0 \\ \vdots \\ 0 \end{bmatrix}\Longrightarrow \begin{bmatrix}1 & \ldots & 1 \\ a_{11}(T( P)) & \ldots & a_{1n}(T(P)) \\ \vdots & & \vdots \\ a_{n-2,1}(T(P)) & \ldots & a_{n-2,n}(T(P))\end{bmatrix}\begin{bmatrix}x_1 \\ \vdots \\ x_n \end{bmatrix}=\begin{bmatrix} 1 \\ 0 \\ \vdots \\ 0 \end{bmatrix}$$

\end{itemize}

\end{itemize}

Statements (b) and (c) are equivalent and statement (a) implies both of them.

\end{theo}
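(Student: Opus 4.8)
The plan is to close the cycle by proving the easy implication $(a)\Rightarrow(b)$ directly and then the equivalence $(b)\Leftrightarrow(c)$; since $(b)\Leftrightarrow(c)$, the implication $(a)\Rightarrow(c)$ follows for free. For $(a)\Rightarrow(b)$ I would argue exactly as in Lemma \ref{lemma.d}: if $\Psi(P)$ is the line through two centers $\Phi(P),\Phi'(P)$, then for a similarity $T$ one has $\Phi(T(P))=T(\Phi(P))$ and $\Phi'(T(P))=T(\Phi'(P))$ because centers commute with similarities, so $\Psi(T(P))$ is the line through $T(\Phi(P))$ and $T(\Phi'(P))$, namely $T(\Psi(P))$; and if $P,Q$ differ only in labelling then $\Phi(P)=\Phi(Q)$ and $\Phi'(P)=\Phi'(Q)$, so $\Psi(P)=\Psi(Q)$. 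Hence $\Psi$ is a central line.

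For $(c)\Rightarrow(b)$ I would first isolate two elementary facts about affine coordinates. If $X=\sum_i x_iV_i$ with $\sum_i x_i=1$, then (i) the permuted tuple $(x_{\alpha(1)},\dots,x_{\alpha(n)})$ represents the \emph{same} point $X$ with respect to the relabelled polygon $\alpha(P)=(V_{\alpha(1)},\dots,V_{\alpha(n)})$, since $\sum_k x_{\alpha(k)}V_{\alpha(k)}=\sum_j x_jV_j=X$; and (ii) the same tuple $(x_1,\dots,x_n)$ used with the vertices of $T(P)$ gives $\sum_i x_iT(V_i)=T(X)$, because $T$ is affine and $\sum_i x_i=1$. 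By Lemma \ref{lemm.n-2} the rank condition in $(c)$ forces the solution set of the system to project onto the line $\Psi(P)$. Now take $X\in\Psi(P)$, represented by a solution $(x_i)$ of $P$'s system: the relabelling implication in $(c)$ sends this to a solution $(x_{\alpha(i)})$ of $\alpha(P)$'s system, which by fact (i) again represents $X$, so $X\in\Psi(\alpha(P))$; likewise the similarity implication together with fact (ii) gives $T(X)\in\Psi(T(P))$. Applying the same reasoning to $\alpha^{-1}\in D_n$ and to $T^{-1}$ yields the reverse inclusions, so $\Psi(\alpha(P))=\Psi(P)$ and $\Psi(T(P))=T(\Psi(P))$, which is precisely $(b)$.

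The hard part is $(b)\Rightarrow(c)$, where for each $P$ I must produce a rank-$(n-1)$ system whose solution line in $\mathbb R^n$ projects onto $\Psi(P)$ and whose coefficients transform as required. The decisive idea is to fix a \emph{canonical} lift instead of an arbitrary one. Assuming $P$ is not flat, the affine map $x\mapsto\sum_i x_iV_i$ restricted to $\{\sum_i x_i=1\}$ is a surjection onto the plane; to each $X\in\Psi(P)$ I would assign the unique \emph{minimal-norm} tuple of affine coordinates representing it, i.e. the pseudoinverse solution of the $3\times n$ system whose rows are the two coordinates of the $V_i$ and the all-ones row. This solution is affine in $X$, so its image $\ell_P\subset\mathbb R^n$ as $X$ runs over $\Psi(P)$ is a genuine line, cut out by a compatible rank-$(n-1)$ system that contains the all-ones row; the remaining rows define the functions $a_{ij}(P)$. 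The point of the minimal-norm choice is its equivariance, which makes the two implications of $(c)$ hold as set equalities among the lines $\ell_P$: since $\sum_i x_i^2$ is permutation invariant, the minimal-norm lift for $\alpha(P)$ is the coordinate permutation by $\alpha$ of the one for $P$, giving $\ell_{\alpha(P)}=\alpha\cdot\ell_P$ and hence the relabelling implication; and since $T$ is a similarity the constraint $\sum_i x_iV_i=X$ is equivalent to $\sum_i x_iT(V_i)=T(X)$, so the minimal-norm lift is unchanged, giving $\ell_{T(P)}=\ell_P$ and hence the similarity implication. Here I use that $(b)$ guarantees $\Psi(\alpha(P))=\Psi(P)$ and $\Psi(T(P))=T(\Psi(P))$, so the lines being lifted are correctly matched.

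I expect the genuine obstacle to lie entirely in $(b)\Rightarrow(c)$, on two fronts. First, one must check that the minimal-norm lift is the right equivariant object and that its equivariance reproduces the two implications of $(c)$ verbatim; this is bookkeeping about which index is permuted where, routine but easy to get wrong. Second, the construction presumes $P$ non-flat, so that the $3\times n$ matrix has full row rank and the pseudoinverse is defined; the flat case must be handled separately, in the spirit of the flat-case argument in the proof of Theorem \ref{theo.interpretation}, by replacing the affine span of the plane with the line spanned by the vertices and imposing an auxiliary symmetry condition to recover a unique lift. Once these are settled, $(b)\Rightarrow(c)$ is complete and, combined with the first two paragraphs, proves the theorem.
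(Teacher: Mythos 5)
The paper itself offers no argument for this theorem (it is stated with ``The proof is immediate''), so your write-up is necessarily a fuller route, and most of its substance is sound. Your $(a)\Rightarrow(b)$ and $(c)\Rightarrow(b)$ arguments are correct as written, and in $(b)\Rightarrow(c)$ you correctly isolate the one point that is \emph{not} a tautology for $n\geq 4$: the full preimage of $\Psi(P)$ inside the hyperplane $\{x_1+\ldots+x_n=1\}$ has dimension $n-2$, while condition (c) demands a rank-$(n-1)$ system, i.e.\ a one-dimensional solution set, so one must select a line inside that preimage equivariantly in $P$. Your minimal-norm section does this job: it is affine and injective in $X$ (for non-flat $P$), permutation-equivariant because $\sum_i x_i^2$ is a symmetric function of the tuple, and similarity-compatible because the tuples representing $X$ with respect to $P$ are exactly the tuples representing $T(X)$ with respect to $T(P)$; combined with $\Psi(\alpha(P))=\Psi(P)$ and $\Psi(T(P))=T(\Psi(P))$ from (b), this gives $\ell_{\alpha(P)}=\alpha\cdot\ell_P$ and $\ell_{T(P)}=\ell_P$, which is precisely what the two implications of (c) require of the solution sets. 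Two small points you leave implicit are fine: the functions $a_{ij}(P)$ themselves need no equivariance (only the solution lines do, so any matrix with kernel equal to the linear span of $\ell_P$ serves), and the all-ones row is automatically independent of the homogeneous rows because $\ell_P$ lies in $\{\sum_i x_i=1\}$ and so avoids the origin.

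The genuine gap is your flat case, and it cannot be repaired in the way you suggest. For a flat $P$, every affine combination $\sum_i x_iV_i$ lies on the line $L_0$ spanned by the vertices, so the second bullet of (c) forces $\Psi(P)\subseteq L_0$; but (b) does not force this. The reflection $T$ across $L_0$ fixes every vertex, hence fixes $P$ as a labelled tuple; for a \emph{center} this pins $\Phi(P)$ onto $L_0$ (which is why the flat case of Theorem \ref{theo.interpretation} goes through), but for a \emph{line} it only forces $\Psi(P)$ to be $T$-invariant, and the perpendiculars to $L_0$ are $T$-invariant as well. Concretely: let $\mathcal F_n$ be the set of flat $n$-gons with not all vertices equal, and let $\Psi(P)$ be the perpendicular to $L_0$ through the centroid. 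This $\Psi$ satisfies (b), yet no choice of functions $a_{ij}$ can realize (c), since points of $\Psi(P)$ off $L_0$ are not affine combinations of the vertices at all. So $(b)\Rightarrow(c)$ is false on families containing flat polygons; your proposed patch (a symmetrized lift in the spirit of the flat case of Theorem \ref{theo.interpretation}) addresses only the uniqueness of the lift, not this containment, which is where the implication actually dies. The fix is to exclude flat polygons from $\mathcal F_n$ (which is evidently what the paper's ``immediate'' proof tacitly assumes) or to add the hypothesis that $\Psi(P)$ lies in the affine hull of the vertices; with either amendment your argument is complete.
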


The proof is immediate.

Statements (a) and (b) are not equivalent. The following is a good counterexample:

\begin{exam} \label{exam.rectangulo} Let $\mathcal F_4$ be the set of rectangles minus the set of squares. Let $\Psi:\mathcal F_4\to\mathcal L$ be the function that maps each element in $\mathcal F_4$ to the median crossing the largest sides. This is a central line in the sense of Definition \ref{defi.cl}. But for any element in $\mathcal F_4$ all the centers coincide and therefore $\Psi$ is not a Kimberling central line. 
To see this, note that $\mathcal F_4$ is a subset of the family  $\mathcal S_4$ defined in \cite{PS.C}. In Theorem 17 in that article it is proved that  for every element in $\mathcal S_4$ all the centers coincide.

\end{exam}

On the other hand, we have the following:

\begin{prop} \label{theo.main1}  Let $\mathcal F_n$ not containing elements with all their vertices equal and $\Psi:\mathcal F_n\to\mathcal L$.  If all the elements in $\mathcal F_n$ have a central vector parallel to $\Psi(P)$, then statement (b) (equiv. (c)) implies statement (a).

\end{prop}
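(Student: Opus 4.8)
The plan is to produce two centers $\Phi,\Phi'$ whose joining line is $\Psi(P)$ for every $P\in\mathcal F_n$, and which never coincide; by the definition of Kimberling central line this is exactly statement (a). I would manufacture both centers so that they automatically land on $\Psi(P)$: the first by projecting a known center onto the line, and the second by translating the first along the direction supplied by the central vector.

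First I would take the centroid $G(P)=\frac1n(V_1+\cdots+V_n)$, which is a center, since it is invariant under relabellings (the sum is symmetric) and commutes with similarities (any affine map commutes with the formation of centroids). Then I would set $\Phi(P)$ to be the orthogonal projection of $G(P)$ onto the line $\Psi(P)$. This $\Phi$ is again a center: invariance under relabellings is inherited from $G$ and $\Psi$ (statement (b)), and it commutes with similarities because a similarity sends the pair ``point, line'' to the similar pair and preserves orthogonality, hence sends nearest point to nearest point. By construction $\Phi(P)\in\Psi(P)$.

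For the second center I would use the central vector $v$. Since $v(P)$ is parallel to $\Psi(P)$ and $\Phi(P)\in\Psi(P)$, any point $\Phi(P)+u(P)$ with $u(P)$ a nonzero multiple of $v(P)$ still lies on $\Psi(P)$ and differs from $\Phi(P)$; the hypothesis that $\mathcal F_n$ contains no $n$-gon with all vertices equal is what lets us take $v(P)\neq 0$ (a single point would force $v(P)=0$ by isotropy), so the two centers never coincide. It remains to choose $u(P)$ so that $\Phi':=\Phi+u$ commutes with similarities. Because $\Phi$ already does and a translation vector transforms under the linear part of a similarity, writing a similarity as $T(x)=Ax+b$ one checks $\Phi'(T(P))=T(\Phi'(P))$ reduces to the single requirement $u(T(P))=A\,u(P)$, i.e.\ that $u$ be equivariant under the full linear part, scaling included.

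The main obstacle is precisely this requirement: a central vector is only required to be translation-invariant and to commute with plane congruences, so its behaviour under a pure scaling $P\mapsto\lambda P$ is not pinned down by the definition, whereas a center must commute with \emph{all} similarities. I would resolve this by normalising. Choose a central length $\ell$ (for instance the perimeter, which is invariant under relabellings and congruences, is positive on $\mathcal F_n$, and is homogeneous of degree $1$), and put $u(P)=\ell(P)\,v(P)/\|v(P)\|$. Writing $v(\lambda P)=c(\lambda)v(P)$ for the scaling factor, a direct computation gives $u(T(P))=\lambda\,\operatorname{sign}(c(\lambda))\,A\,u(P)/|\det\text{-scale}|$-type factors cancelling to $u(T(P))=\operatorname{sign}(c(\lambda))\,A\,u(P)$, so the equivariance $u(T(P))=A\,u(P)$ holds exactly when $\operatorname{sign}(c(\lambda))=1$, that is, when scalings do not reverse the direction of $v$. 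This is automatic for any central vector built continuously (in particular homogeneously) from the vertices, as all the examples in the paper are. With $u$ so chosen, $\Phi'=\Phi+u$ is a center lying on $\Psi(P)$ and distinct from $\Phi$, and the pair $\Phi,\Phi'$ witnesses that $\Psi$ is a Kimberling central line, establishing that (b) implies (a).
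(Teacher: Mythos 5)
Your construction is essentially the paper's own proof: the paper likewise takes $\Phi(P)$ to be the point of $\Psi(P)$ closest to the centroid and then sets $\Phi'(P)=\Phi(P)+p(P)v(P)$, with the perimeter $p$ playing exactly the role of your central length $\ell$. Your additional discussion of scale-equivariance (the unit normalization of $v$ and the sign analysis) only makes explicit a point the paper passes over silently --- its definition of central vector does not prescribe behaviour under scalings, and the paper's $\Phi'$ tacitly assumes $v$ is scale-invariant --- so your version is, if anything, the more careful one.
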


\begin{proof} One of the two required centers is easy to find: take the point  $\Phi(P)$ in $\Psi(P)$ closest to the centroid. $\Phi(P)$ is a center.

Every $n$-gon has, at least, one central length: the perimeter $p(P)$. Define 
$$\Phi'(P)=\Phi(P)+p(P)v(P),$$

\noindent where $v(P)$ is the central vector ensured by the statement. $\Phi'(P)$ is a second center, different from $\Phi(P)$.

\end{proof}

From this theorem a new question arises. \emph{When does a $n$-gon have a central vector?} This will be considered in Section \ref{sect.symmetry}, but in most of the cases of $n$-gons with a central line we will ``have a central vector parallel to $\Psi(P)$'' as soon as $P$ is ``a little bit non-regular'' (this will be made precise later).

For most of the problems related to centers and central lines,  Kimberling central lines are a useful approach. So, in an analogue way to what Kimberling did, let us study the equations of a Kimberling central line.

\noindent \textbf{Method to obtain the equations of a Kimberling central line.}
Let $\Phi_1,\Phi_2:\mathcal F_n\to\mathbb R^2$ be two centers and $g_1,g_2:\widetilde{\mathcal F_n}\to\mathbb R$ be the two corresponding $n$-gon center fucntions. For a fixed $P=(V_1,\ldots, V_n)$ we write 
$$
a_{mk}:=g_m([d_{\rho^{k-1}(i)\rho^{k-1}(j)}]), 
\mbox{ where }  m=1,2 \mbox{ and }  1\leq k\leq n \, .
$$

If $X$ is in the line  through $Q_1(P)$ and $Q_2(P)$, then it admits and expression $X=\gamma_1V_1+\ldots+ \gamma_nV_n$ such that the matrix
$$
\begin{bmatrix}
a_{11} & \ldots & a_{1n} \\ a_{21} & \ldots & a_{2n}\\
\gamma_1 & \ldots & \gamma_n
\end{bmatrix}
$$
has rank $2$. 

Imposing rank $2$ for the above matrix we obtain $\binom{n}{3}$ equations (some of them redundant).  Note that the equation $\gamma_1+\ldots+\gamma_n=1$ is compatible with the equations obtained above since $a_{11}+ \cdots+ a_{1n}=1$ and $a_{21}+ \cdots+ a_{2n}=1$ and so $(1,\ldots,1)$ does not belong to the corresponding orthogonal space.

\begin{exam} \label{exam.simpleline4} Let us find the equations of the line passing through the centroid $\Phi_C$ and the simple center $\Phi_S$, whose expressions are
$$\Phi_C(P)=\frac{1}{4}V_1+\ldots+\frac{1}{4}V_4,\qquad \Phi_S(P)=\frac{1}{2(d_{13}+d_{24})}(d_{13}V_1+d_{24}V_2+d_{13}V_3+d_{24}V_4) .$$
These are defined for the family of $n$-gons such that these centers do not coincide.

To find the equations of the line through $\Phi_C(P)$ and $\Phi_S(P)$ we impose rank 2 for the matrix
$$
\begin{bmatrix}
1 & 1 & 1 & 1 \\ d_{13} & d_{24} & d_{13} & d_{24}\\
\gamma_1 & \gamma_2 & \gamma_3 & \gamma_4
\end{bmatrix}.
$$
If $A_i$ denotes the $3\times 3$ matrix with columns $i,i+1,i+2 \mod 4$, then we have
\begin{eqnarray*}
det(A_1)=-det(A_3)&=&(d_{13}-d_{24})\gamma_1+(d_{24}-d_{13})\gamma_3,\\ 
det(A_2)=-det(A_4)&=&(d_{24}-d_{13})\gamma_2+(d_{13}-d_{24})\gamma_4,
\end{eqnarray*}
and therefore we obtain the equations 
$$
\gamma_1-\gamma_3=0, \, \mbox{ and } \,
\gamma_2-\gamma_4=0 .
$$
Therefore the equations for the line are
$$\begin{bmatrix}1 & 1 & 1 & 1 \\ 1 & 0 & -1 & 0 \\ 0 & 1 & 0 & -1\end{bmatrix} \begin{bmatrix}\gamma_1\\ \gamma_2\\ \gamma_3 \\ \gamma_4\end{bmatrix}=\begin{bmatrix} 1 \\ 0 \\ 0 \end{bmatrix} .$$

\end{exam}

\section{Symmetries, coincidence and collinearity of centers} \label{sect.symmetry}

We say that the group of symmetries of $P\in\mathcal P_n$ is the group of plane rigid motions that fix the set of vertices.

\begin{theo} \label{theo.symm} Let $\Phi:\mathcal F_n\to\mathbb R^2$ be some $n$-gon center. Then, for every $n$-gon $P\in\mathcal F_n$, $\Phi(P)$ belongs to the set of fixed points of the symmetry group of $P$.

\end{theo}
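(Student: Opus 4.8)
The plan is to exploit the two defining properties of a center from Definition \ref{defi} at the same time: equivariance under similarities and invariance under relabellings. The key observation is that a symmetry of $P$ is simultaneously a similarity (an isometry) and a relabelling, so the two properties act in concert and pin $\Phi(P)$ down. Concretely, I would fix $P=(V_1,\ldots,V_n)\in\mathcal F_n$ and an element $T$ of the symmetry group $G$ of $P$. Being a rigid motion that carries vertices to vertices, $T$ induces a permutation $\pi$ of $\{1,\ldots,n\}$ with $T(V_i)=V_{\pi(i)}$, so $T(P)=(V_{\pi(1)},\ldots,V_{\pi(n)})$. The crucial preliminary point is that $\pi\in D_n$: a symmetry preserves the cyclic adjacency structure of the $n$-gon and therefore acts on the labels as a rotation or reflection of the cycle, that is, as an element of the dihedral group generated by $\rho$ and $\sigma$. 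Hence $T(P)$ is exactly a relabelling of $P$.

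With this in hand the computation is immediate. Since $T$ is an isometry and hence a similarity, property $(1)$ of Definition \ref{defi} gives $\Phi(T(P))=T(\Phi(P))$. Since $T(P)$ differs from $P$ only in its labelling by $\pi\in D_n$, property $(2)$ gives $\Phi(T(P))=\Phi(P)$. Combining the two identities yields $T(\Phi(P))=\Phi(P)$, i.e. $\Phi(P)\in\mathrm{Fix}(T)$. As $T$ ranges over the whole symmetry group $G$, we conclude $\Phi(P)\in\bigcap_{T\in G}\mathrm{Fix}(T)$, which is precisely the set of fixed points of $G$.

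The step I expect to be the main obstacle is the preliminary claim that the permutation $\pi$ induced by a symmetry lies in $D_n$, since this is exactly the hinge that lets property $(2)$ apply. The claim is geometrically transparent for genuine polygon symmetries, but it needs care in degenerate situations: a rigid motion fixing only the \emph{set} of vertices, without respecting their cyclic order, may realize a permutation outside $D_n$ (for instance a reflection interchanging the two crossing vertices of a crossed quadrilateral), and for such a motion invariance under relabellings simply does not apply. One therefore has to read ``symmetry'' as a motion preserving the labelled $n$-gon up to a $D_n$-relabelling; equivalently, one restricts to the subgroup of vertex-set symmetries inducing elements of $D_n$ and verifies that this subgroup already has the same fixed-point locus as $G$. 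Once this combinatorial point is settled, the remainder is the two-line argument above.
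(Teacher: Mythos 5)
Your argument is precisely the paper's proof: a symmetry $T$ of $P$ acts on it as a relabelling $\alpha(P)$ for some $\alpha\in D_n$, so $T(\Phi(P))=\Phi(T(P))=\Phi(\alpha(P))=\Phi(P)$, and intersecting the fixed-point sets over the whole group gives the claim. Your closing caveat --- that a rigid motion fixing only the vertex \emph{set} may induce a permutation outside $D_n$, in which case invariance under relabellings does not apply --- is a genuine subtlety that the paper's one-line proof asserts without justification, so your treatment of that hinge is, if anything, more careful than the original.
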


\begin{proof}  For every rigid motion $T$ in the symmetry group of $P$, we have that the action of this rigid motion on $P$ can be viewed as a relabelling $\alpha(P)$ for some $\alpha\in D_n$. So $T(\Phi(P))=\Phi(T(P))=\Phi(\alpha (P))=\Phi(P)$.
\end{proof}

Coincidence and collinearity of centers and the existence of a central vector are, in some sense, signs of symmetry in the $n$-gon. See \cites{AS.Q, PS.C} and for a different but similar context \cite{E}.

For the triangle, this kind of problems have been very well studied (although this has not been stated in these terms). We have the following:

\begin{theo} \label{theo.symmetryiff} Let $P$ be a trigon.
\begin{itemize}
    \item[(a)] All the centers in $P$ are collinear if and only if $P$ is isosceles.
    
    \item[(b)] All the centers in $P$ are coincident if and only if $P$ is equilateral.
    
\end{itemize}

\end{theo}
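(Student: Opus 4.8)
The plan is to treat each biconditional by separating the ``symmetry forces a degenerate configuration of centers'' direction, which is a direct consequence of Theorem \ref{theo.symm}, from its converse, which I would obtain by exhibiting explicit classical centers and invoking Lemma \ref{lemma.d}.

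For the forward implications I would reason as follows. If $P$ is isosceles then its symmetry group contains a reflection (realized by some element of $D_3$, e.g.\ $\sigma$ from Equation \eqref{eq.rho} when the apex is $V_1$), and the fixed-point set of a reflection is its axis, which is a line. By Theorem \ref{theo.symm} every center of $P$ lies on this axis, so all centers are collinear. If $P$ is moreover equilateral, its symmetry group is the full dihedral group $D_3$; the rotation by $120^\circ$ already has a single fixed point (the circumcenter), so the common fixed-point set of the whole group is one point, and Theorem \ref{theo.symm} forces every center to equal it. Hence all centers coincide.

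For the converses I would produce classical triangle centers that separate exactly in the non-symmetric case and then apply the two consequences of Lemma \ref{lemma.d} noted after it: a trigon with two distinct centers has the entire line through them as centers, and a trigon with three affinely independent centers has every point of the plane as a center. The centroid $G$, circumcenter $O$ and incenter $I$ are all centers in the sense of Definition \ref{defi}, since each commutes with similarities and is invariant under relabelling. For (b) I would use $G$ and $O$: the identity $G=O$ is equivalent to $(V_1-O)+(V_2-O)+(V_3-O)=0$, i.e.\ to three vectors of common length $R$ summing to zero, which forces mutual angles of $120^\circ$ and hence $P$ equilateral; so a non-equilateral $P$ carries two distinct centers and its centers cannot all coincide. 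For (a) I would use $G$, $O$ and $I$, and show they are affinely independent whenever $P$ is scalene, so that by Lemma \ref{lemma.d} the centers fill the plane and are not collinear.

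The main obstacle is this last affine-independence claim: that $G$, $O$ and $I$ fail to be collinear precisely for scalene triangles. This rests on the classical fact that the incenter lies on the Euler line $GO$ if and only if the triangle is isosceles; I would either cite it or carry out the short barycentric computation in which the $3\times 3$ determinant expressing collinearity of $G$, $O$, $I$ factors so as to vanish exactly when two sidelengths agree. A minor point to settle in passing is the degenerate reading of ``collinear'' in the equilateral case, where the centers reduce to a single point and are trivially collinear, keeping the equilateral subcase consistent with the inclusive notion of isosceles used in (a).
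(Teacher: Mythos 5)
Your proposal is correct and takes essentially the same route as the paper: forward directions via the symmetry group and Theorem~\ref{theo.symm}, and converses by exhibiting the centroid, circumcenter and incenter together with the classical fact (cited in the paper as \cite{F}) that the incenter lies on the Euler line if and only if the trigon is isosceles. The only deviation is in part (b), where you detect non-coincidence via $G=O$ iff equilateral (a short vector argument with the circumradius) while the paper uses $G=I$ iff equilateral via the barycentric formula for the incenter --- an interchangeable choice, and your extra appeal to Lemma~\ref{lemma.d} is harmless but not needed, since three non-collinear (resp.\ two distinct) centers already contradict collinearity (resp.\ coincidence) directly.
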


\begin{proof}

\begin{itemize}
    \item[(a)] If a trigon is isosceles, then all its centers are collinear, since $P$ has an axis of symmetry. On the other hand, we know that the centroid, the circumcenter and the incenter are collinear if and only if the trigon is isosceles (the incenter of a trigon lies in the Euler line if and only if it is isosceles, see \cite{F}, for instance).

    \item[(b)] If $P=(V_1,V_2,V_3)$ is equilateral, then all its centers are coincident, since $P$ has three concurrent and different axis of symmetry. On the other hand, the incenter of a trigon $P$ is
    $$X=\frac{d_{23}}{p}V_1+\frac{d_{13}}{p}V_2+\frac{d_{24}}{p}V_3, $$
    
    \noindent where $p$ denotes the perimeter of $P$. So we can see that this point coincides with the centroid if an only if $P$ is equilateral.

\end{itemize}

\end{proof}

 In the case $n\geq 4$, we have not guaranteed the existence of three fixed centers (functions) that are collinear if and only if the set of fixed points of the symmetry group consists of a single axis (as happens for the centroid, circumcenter and incenter for $n=3$), neither of two fixed centers (functions) that are  coincident if and only if the set of fixed points of the symmetry group consists of a single point (as happens for the centroid and the incenter for $n=3$).

Moreover, we cannot even guarantee the existence of three non-collinear centers (points) for each $P$ with no axis of symmetry, neither of two non-coincident centers (points) for $n$-gons such that the set of fixed points of the symmetry group consists of more than a single point.

The strongest statement that we can make so far is the following:

\begin{theo} \label{theo.symmetry} Let $P$ be a $n$-gon such that not all its vertices are equal.

\begin{itemize}
    \item[(a)] $P$ has a central vector $v(P)$ if and only if $P$ has two centers which are not coincident. Every point on the line determined by the centroid and $v(P)$ is the (image of) some center.
    
    \item[(b)] $P$ has two linearly independent central vectors $v_1(P),v_2(P)$ if and only if $P$ has three centers which are not collinear. Every point in the plane is the (image of) some center.
\end{itemize}

\end{theo}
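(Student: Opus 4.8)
The plan is to set up a dictionary between centres and central vectors and then let Lemma~\ref{lemma.d} do the geometric work. Write $G$ for the centroid $G(P)=\tfrac1n(V_1+\cdots+V_n)$; it commutes with similarities and is relabelling invariant, hence is a centre. The two facts I would isolate first are: (1) the difference of two centres is a central vector, and (2) the centroid displaced by a central vector is again a centre. Granting these, each equivalence reduces to a straightforward dimension count, and the two final assertions (``every point on the line'', ``every point of the plane'') follow by taking affine combinations of the centres produced, which are centres by Lemma~\ref{lemma.d}.

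For fact (1), given centres $\Phi_1,\Phi_2$ and a similarity $T$ with $T(X)=\lambda R X+t$ ($\lambda>0$, $R$ a linear isometry), I would compute $(\Phi_2-\Phi_1)(T(Q))=T\Phi_2(Q)-T\Phi_1(Q)=\lambda R\,(\Phi_2-\Phi_1)(Q)$, the translation cancelling; together with the obvious relabelling invariance this shows $v:=\Phi_2-\Phi_1$ is a central vector, and moreover it is homogeneous of degree one under scalings. This immediately gives the ``if'' direction of (a): if $\Phi_1(P)\neq\Phi_2(P)$ then $v(P)\neq 0$. For the ``if'' direction of (b), three centres $\Phi_0,\Phi_1,\Phi_2$ whose images at $P$ are affinely independent produce central vectors $v_i:=\Phi_i-\Phi_0$ ($i=1,2$) with $v_1(P),v_2(P)$ linearly independent, since affine independence of the three points is exactly linear independence of these two displacements.

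For fact (2) and the ``only if'' directions, I would go the other way: from a central vector $v$ I build the centre $\Phi':=G+v$, checking $\Phi'(T(Q))=T\Phi'(Q)$ and relabelling invariance just as above. Then in (a) a nonzero central vector $v$ yields the centre $\Phi'=G+v$ with $\Phi'(P)=G(P)+v(P)\neq G(P)$, so $G$ and $\Phi'$ are two non-coincident centres; and every point of the line through $G(P)$ and $\Phi'(P)$, namely the line through the centroid in direction $v(P)$, is a centre by applying Lemma~\ref{lemma.d} to the affine combinations $\mu G+(1-\mu)\Phi'$ as $\mu$ ranges over $\mathbb R$. In (b), two linearly independent central vectors $v_1,v_2$ give centres $G,\,G+v_1,\,G+v_2$ with affinely independent images at $P$, and the affine combinations of these three (again centres by Lemma~\ref{lemma.d}) realise every point of the plane.

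The step I expect to be the main obstacle is making fact (2) rigorous against the definition of central vector, because that definition only prescribes behaviour under translations and linear congruences, not under scalings: to guarantee that $G+v$ commutes with the \emph{full} similarity group I need $v$ to be homogeneous of degree one, which is automatic for the central vectors arising in fact (1) but must be arranged in general by renormalising, e.g.\ replacing $v$ by $\tfrac{p}{\lVert v\rVert}\,v$ on the similarity- and relabelling-closed subfamily where $v\neq 0$, with $p$ the perimeter. This is exactly where the hypothesis that not all vertices of $P$ are equal is used: it forces $p(P)>0$ (and rules out the degenerate configuration on which every central vector vanishes and every centre collapses to a single point), so the construction is well defined and non-degenerate on a family containing $P$.
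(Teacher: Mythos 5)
Your proposal is correct and takes essentially the same route as the paper: the published proof likewise pairs the centroid with $\Phi_C(P)+p(P)v(P)$ (perimeter times central vector) in one direction, takes differences of centers to produce central vectors in the converse direction, and deduces the ``every point on the line/plane'' claims from Lemma~\ref{lemma.d}. Your closing discussion of the scaling normalisation is a careful refinement of a point the paper's proof passes over silently, not a different argument.
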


\begin{proof} We start with the first statement of each part. Let us denote by $p(P)$ the perimeter of a $n$-gon $P\in\mathcal P_n$.

\begin{itemize}
    \item[(a)] From a central vector $v(P)$ we can always obtain two centers: the centroid $\Phi_C(P)$ and $\Phi_C(P)+p(P)v(P)$. Conversely, from two centers $\Phi_1,\Phi_2$ we can obtain a central vector taking the one from the point $\Phi_1(P)$ to $\Phi_2(P)$.
    
    \item[(b)] From two such vectors, we can obtain three centers: the centroid $\Phi_C(P)$, $\Phi_C(P)+p(P)v_1(P)$ and $\Phi_C(P)+p(P)v_2(P)$. Conversely, from such three centers $\Phi_1,\Phi_2,\Phi_3$, we can obtain two central vectors taking the one from  $\Phi_1(P)$ to $\Phi_2(P)$ and the one from $\Phi_1(P)$ to $\Phi_3(P)$.

\end{itemize}

The rest is a consequence of  Lemma \ref{lemma.d}.

\end{proof}

So far we have used the conditions  ``if a $n$-gon has two different centers'', ``if a $n$-gon has three centers which are affinely independent'' and ``if a $n$-gon has a central vector in a given direction''. Now we will clarify them, including a list of sufficient conditions that imply them.

\begin{theo}  If a $n$-gon satsifies any of the properties

\begin{enumerate}

    \item One of the sides is greater (smaller) than the others,

    \item It is convex and the interior angles between two adjacent sides is greater (smaller) than the others,

\end{enumerate}

\noindent then it has a central vector and thus has two different centers.

\end{theo}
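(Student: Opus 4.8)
The plan is to reduce everything to Theorem~\ref{theo.symmetry}(a). Since a polygon satisfying either property certainly does not have all its vertices equal, that result tells us $P$ has two non-coincident centers as soon as $P$ admits a central vector that is nonzero at $P$: concretely, the centroid $\Phi_C(P)$ and $\Phi_C(P)+p(P)v(P)$ are then two distinct centers. Thus it suffices, in each case, to write down an equivariant vector and check it does not vanish. For property (1), with $\mathcal F_n$ the family of $n$-gons having a unique longest (resp. shortest) side, I would let $M(P)$ be the midpoint of that side and set $v(P)=\Phi_C(P)-M(P)$. For property (2), with $\mathcal F_n$ the family of convex $n$-gons having a unique vertex $V_*(P)$ of largest (resp. smallest) interior angle, I would set $v(P)=V_*(P)-\Phi_C(P)$.

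Checking that each $v$ is a central vector is routine and I would dispatch it quickly. Being a difference of two points, $v$ is translation-invariant; since an isometry carries the longest side to the longest side (resp. the extreme-angle vertex to the extreme-angle vertex) and the centroid to the centroid, $v$ commutes with linear congruences; and because the distinguished side or vertex is selected by intrinsic metric data while the centroid ignores the labelling, $v$ is invariant under $D_n$-relabellings. Moreover both families are closed under similarities and relabellings (scaling and relabelling preserve which side is longest and which angle is largest), so $v$ is a well-defined function on $\mathcal F_n$.

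The one substantive step is the nonvanishing $v(P)\neq 0$, and this is where the geometry really enters. If the vertices are not all collinear and the polygon is convex, then $\Phi_C(P)=\tfrac1n\sum_i V_i$ is a convex combination with strictly positive weights and hence lies in the interior, whereas a side midpoint lies on an edge and a vertex lies at a corner of the boundary; an interior point cannot equal a boundary point, so $v(P)\neq 0$ in both cases. This is exactly the role of convexity in property (2), and the same interiority settles property (1) in the convex case. I expect this nonvanishing to be the main obstacle, and it is genuinely necessary rather than cosmetic: by Theorem~\ref{theo.symm}, any $P$ admitting a nontrivial rotation in its symmetry group has all its centers pinned to the single fixed point of that rotation, and a centrally symmetric quadrilateral whose centre is the midpoint of its (unique) longest side is such a $P$ — there $v(P)=0$ and in fact no central vector can be nonzero. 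So the argument should be organized to rule out precisely this degeneracy, which convexity does for free by pushing the centroid off the boundary.
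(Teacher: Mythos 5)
Your construction coincides exactly with the paper's: the paper's entire proof consists of naming the vector from the centroid to the midpoint of the extremal side (case 1) and the vector from the centroid to the vertex of extremal angle (case 2), with no verification whatsoever --- neither of equivariance nor of nonvanishing. So the route is the same; what you add is precisely what the paper omits. The equivariance check is, as you say, routine, but the nonvanishing analysis is genuine content: Theorem \ref{theo.symmetry}(a) produces two distinct centers only from a \emph{nonzero} central vector, and the paper silently assumes this. Your interiority argument (centroid strictly interior by convexity, distinguished midpoint or vertex on the boundary) correctly settles case (2) and the convex instances of case (1).

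Your closing caveat is in fact sharper than you present it. Since the paper's $\mathcal P_n$ consists of arbitrary $n$-tuples --- crossed polygons included --- and hypothesis (1) carries no convexity assumption, the degeneracy you describe actually occurs and refutes part (1) as literally stated. Take $V_1=(0,0)$, $V_2=(10,0)$, $V_3=(6,1)$, $V_4=(4,-1)$: the side $V_1V_2$ (length $10$) is strictly longer than the others ($\sqrt{17},\sqrt{8},\sqrt{17}$), yet the rotation $T$ by $\pi$ about $(5,0)$ satisfies $T(P)=\alpha(P)$ with $\alpha=(12)(34)\in D_4$, so by Theorem \ref{theo.symm} every center of $P$ equals $(5,0)$, and by Theorem \ref{theo.symmetry}(a) no nonzero central vector exists; note that $(5,0)$ is simultaneously the centroid and the midpoint of the longest side, so the paper's vector vanishes identically here. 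Two small corrections to your discussion of this example: such a quadrilateral is necessarily crossed, since for a \emph{simple} polygon an orientation-preserving symmetry induces a cyclic relabelling $\rho^k$, forcing the side lengths to repeat with period $k$, which is incompatible with a unique longest side; accordingly, ``centrally symmetric'' must here mean the point reflection pairs adjacent vertices ($V_1\leftrightarrow V_2$, $V_3\leftrightarrow V_4$), not the parallelogram pairing $V_1\leftrightarrow V_3$, $V_2\leftrightarrow V_4$, under which opposite sides are equal and no side is uniquely longest. In summary: your proof is the paper's proof carried out properly, and your restriction of case (1) to convex polygons is a needed repair of the statement rather than a gap in your argument --- the paper's own one-line proof is incomplete on exactly the point you isolate.
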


\begin{proof} In each of the cases:

\begin{enumerate}
    \item Consider the vector joining the centroid with the midpoint of the greatest (smallest) side.
    
    \item Consider the vector joining the centroid with the vertex corresponding to the largest (smallest) angle.
    
\end{enumerate}

\end{proof}

\begin{theo} If a $n$-gon has any of the properties

\begin{enumerate}
    \item It is convex and scalene,
    
    \item It is convex and all the angles are different,
\end{enumerate}

\noindent then it has two linearly independent central vectors and thus has three different centers.

\end{theo}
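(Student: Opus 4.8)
The plan is to reduce everything to Theorem~\ref{theo.symmetry}(b): it suffices to exhibit, for each $P$ in either family, three centers that are not collinear, since that theorem identifies this with the existence of two linearly independent central vectors (and, as a bonus, tells us that every point of the plane is then a center). In both cases I would produce not just three ad hoc centers but a canonical family of $n$ centers attached to $P$, and then argue that three of them cannot be collinear because $P$ is a genuine, non-flat convex $n$-gon.

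For the convex scalene case, the fact that all $n$ sidelengths are pairwise distinct is what makes a canonical selection possible: ranking the sides by length is invariant under relabelling and commutes with similarities, so ``the midpoint of the $k$-th longest side'' is a well-defined center for each $k=1,\dots,n$. These $n$ centers are precisely the midpoints of the sides of $P$. Since $P$ is convex and non-degenerate, its midpoint polygon has positive area, so the side-midpoints are not all collinear; choosing three non-collinear ones among them finishes this case.

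For the convex all-angles-distinct case I would argue in the same spirit, now using central vertices. Because the interior angles are pairwise distinct, ``the vertex with the $k$-th largest interior angle'' is a well-defined central vertex, hence a center (recall that central vertices are centers). The $n$ vertices of a non-flat convex polygon are in convex position and so are not all collinear, and three non-collinear vertices again supply the three non-collinear centers required by Theorem~\ref{theo.symmetry}(b).

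The only real obstacle is the linear independence --- equivalently, the non-collinearity of the three chosen centers --- and the temptation is to fix two specific vectors (say, from the centroid to the midpoints of the longest and of the shortest side). That naive choice can fail: for particular polygons the centroid and two prescribed midpoints happen to line up, making the two vectors parallel. The key is therefore not to commit to two vectors in advance but to rely on the global fact that the entire canonical family (all side-midpoints, or all vertices) cannot be collinear since $P$ spans the plane; three non-collinear members then always exist. Establishing this ``not all collinear'' statement --- positivity of the area of the midpoint polygon, or convex position of the vertices --- is where the (mild) geometric content lies, and is the step I would write out with care.
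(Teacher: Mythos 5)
Your proposal is correct, and it reaches the conclusion by a somewhat different route than the paper. The paper's proof is the direct one: it takes the two vectors from the centroid to the midpoints of the largest and smallest sides (resp.\ to the vertices of the largest and smallest angles), with a fallback to the second largest side (resp.\ angle) in case the first two vectors are collinear. You instead build the whole ranked family of canonical centers (``midpoint of the $k$-th longest side'', ``vertex with the $k$-th largest angle''), observe that they cannot all be collinear, and then pass from three non-collinear centers to two independent central vectors via the equivalence in Theorem~\ref{theo.symmetry}(b) --- the paper, by contrast, exhibits the vectors directly and never needs that theorem. Note that both arguments ultimately rest on the same geometric fact, which the paper leaves implicit: a line through an interior point of a convex polygon meets the boundary in exactly two points, so at most two side midpoints (each lying in the relative interior of its side) can lie on any such line. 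This fact is what certifies the paper's fallback step (the centroid is interior, so the midpoints of the largest and smallest sides already exhaust the line's intersection with the boundary, forcing the third midpoint off it), and it also gives you something slightly stronger than positivity of the area of the midpoint polygon: \emph{any} three of your midpoint centers are non-collinear, so no selection argument is needed in the first case. What each approach buys: the paper's is shorter and produces the vectors explicitly; yours avoids the case analysis, makes the hidden geometric lemma explicit (your stated ``step to write out with care''), and packages the ranking device as genuine centers on the family of convex scalene $n$-gons, which is reusable elsewhere. One small caution for your second case: convexity as such could permit one straight angle and hence three collinear consecutive vertices, but since all angles are distinct at most one angle can equal $\pi$, and in any event you only need that not all $n$ vertices are collinear, which holds for any non-flat convex $n$-gon.
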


\begin{proof} In each of the cases:

\begin{enumerate}
    \item Consider the vector joining the centroid and the midpoint of the largest side, and the vector joining the centroid and the midpoint of the smallest side. If both of them are collinear, then consider the vector joining the centroid  and the midpoint of the second largest side.
    
    \item Consider the vector joining the centroid and the vertex corresponding to the  largest angle, and the vector joining the centroid and the vertex corresponding to the  smallest angle. If both of them are collinear, then consider the vector joining the centroid  and the vertex corresponding to the second largest angle.

\end{enumerate}

\end{proof}

Finally, note that if we have a central vector and a central direction which are not perpendicular, the projection of a central vector in this central direction is another central vector.

\section{Two applications} \label{section.app}

\textbf{Tangential $n$-gons} are those convex $n$-gons admitting an inscribed circle. Let us denote by $\mathcal I_n$ the set of tangential $n$-gons. For this set we can define the \textbf{incenter} $I$ as the center of this inscribed circle.

\begin{figure}[h!]
\centering
\includegraphics[width=6cm]{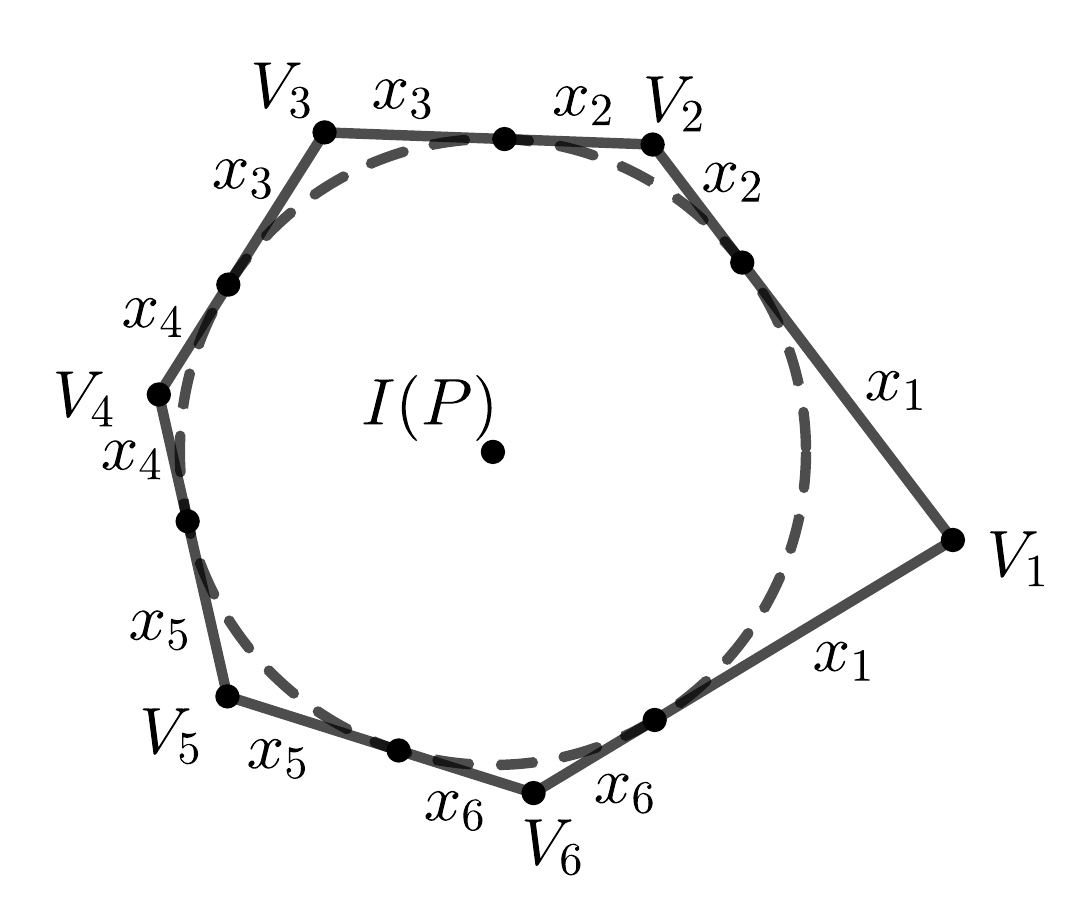}
\caption{Picture corresponding to a tangential hexagon.}
\end{figure}

An elementary argument shows that for every set of side lengths $d_{12},\ldots, d_{n1}$ such that the system of equations
$$\begin{cases}
d_{12}=x_1+x_2\, ,\\ \qquad \vdots \\ d_{n1}=x_n+x_1 \, ,
\end{cases}$$

\noindent has a solution $(x_1,\ldots,x_n)$ consisting of real postitive numbers, there is a tangential convex $n$-gon with these side lengths. These $x_i$ are the tangent lengths of the $n$-gons, that is, the distance from $V_i$ to the points where the circle is tangent to the sides $V_iV_{i-1}$ and $V_iV_{i+1}$.


The incenter is a center in the sense described in this article. There is a description in \cite{M} of the incenter for $n=3$ and $n=4$. Our approach allows us to generalize this description for $n\geq 4$.

\begin{prop} The  center function
$$
g_I([d_{ij}])=x_n([d_{ij}])+x_2([d_{ij}])
$$
corresponds to the incenter.

\end{prop}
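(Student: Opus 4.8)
The plan is to invoke the characterization in Theorem \ref{theo.interpretation}: the center associated to $g_I$ is $\Phi(P)=\sum_{k=1}^n\lambda_kV_k$ with $\lambda_k$ proportional to $g_I$ evaluated on the cyclically shifted distance matrix, and I would show that this affine combination equals the incenter $I$. First I would record how $g_I$ behaves under the shift. Since the tangent lengths $x_1,\dots,x_n$ are recovered from the side lengths by solving the linear system displayed above, they are genuine functions of the distance matrix, and a cyclic relabelling by $\rho^{k-1}$ permutes them cyclically, so that $x_m([d_{\rho^{k-1}(i)\rho^{k-1}(j)}])=x_{m+k-1}([d_{ij}])$ (indices mod $n$). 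Hence $g_I([d_{\rho^{k-1}(i)\rho^{k-1}(j)}])=x_{k-1}+x_{k+1}$, the denominator in Theorem \ref{theo.interpretation} telescopes to $\sum_{k=1}^n(x_{k-1}+x_{k+1})=2\sum_i x_i$, and therefore
\[
\lambda_k=\frac{x_{k-1}+x_{k+1}}{2\sum_i x_i}.
\]
Checking that $g_I$ is a genuine center function is immediate: under $\sigma$ one has $x_n+x_2\mapsto x_{\sigma(n)}+x_{\sigma(2)}=x_2+x_n$, and the tangent lengths scale linearly, giving homogeneity of degree $1$.

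The core of the argument is then the purely geometric claim that $I=\sum_k\lambda_kV_k$ for this specific choice of coefficients. I would set $\ell_i=d_{i,i+1}=x_i+x_{i+1}$, let $u_i$ be the unit vector from $V_i$ to $V_{i+1}$, and let $\nu_i$ be the inward unit normal to side $i$, so that $\nu_i=Ru_i$ for the fixed rotation $R$ by a right angle. The contact point on side $i$ lies at distance $x_i$ from $V_i$, which gives $\ell_i T_i=x_{i+1}V_i+x_iV_{i+1}$, and the incenter satisfies $I=T_i+r\,\nu_i$ for every $i$, where $r$ is the inradius.

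The key step, which I expect to be the real content, is the cancellation of the normal terms. Summing $I=T_i+r\,\nu_i$ with weights $\ell_i$ gives $P\,I=\sum_i\ell_iT_i+r\sum_i\ell_i\nu_i$, where $P=\sum_i\ell_i=2\sum_i x_i$ is the perimeter. Now the polygon closes, $\sum_i\ell_iu_i=\sum_i(V_{i+1}-V_i)=0$, and applying the linear map $R$ yields $\sum_i\ell_i\nu_i=0$. Thus $P\,I=\sum_i\ell_iT_i=\sum_i(x_{i+1}V_i+x_iV_{i+1})$, and collecting the coefficient of each $V_k$ (which receives $x_{k+1}$ from the term $i=k$ and $x_{k-1}$ from the term $i=k-1$) gives $I=\tfrac{1}{P}\sum_k(x_{k-1}+x_{k+1})V_k=\sum_k\lambda_kV_k$, as desired.

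Finally I would record the consistency check with the classical case $n=3$, where $x_{k-1}+x_{k+1}$ is precisely the side length opposite $V_k$, so the formula reduces to the familiar barycentric incenter $[d_{23}:d_{13}:d_{12}]$. The only subtlety to watch is the orientation convention: the inward normals $\nu_i$ must be consistently oriented around the boundary so that $\nu_i=Ru_i$ holds with a single rotation $R$, since this is exactly what forces the weighted normal sum to vanish; everything else is bookkeeping once this cancellation is in place.
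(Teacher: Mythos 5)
Your proof is correct, but it takes a genuinely different route from the paper's. The paper argues by induction on $n$: it takes the cases $n\le 4$ as known (from the literature on the quadrilateral incenter), finds two adjacent angles summing to more than $\pi$ via the angle-sum bound, cuts off the corresponding corner triangle $(V_0,V_1,V_{n+1})$, and combines the induction hypothesis applied to the smaller tangential polygon $P'$ (which shares the incircle) with the excenter formula for the corner triangle and a collinearity relation among $V_0,V_1,V_2$ and $V_0,V_{n+1},V_n$, subtracting the three affine identities to land on the claimed coefficients. Your argument is instead a direct, one-shot computation valid for all $n\ge 3$ simultaneously: writing the contact points as $\ell_iT_i=x_{i+1}V_i+x_iV_{i+1}$, using $I=T_i+r\nu_i$, and killing the normal terms via $\sum_i\ell_i\nu_i=R\bigl(\sum_i\ell_iu_i\bigr)=R(0)=0$, which is exactly the closing-up of the boundary. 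This buys self-containedness (no base case imported from the $n=3,4$ literature, no excenter formula, no case analysis on angles) and makes the mechanism transparent: the weights $\ell_i$ are precisely those for which the inradius contribution cancels. One small wrinkle you should phrase more carefully: for even $n$ the cyclic linear system $d_{i,i+1}=x_i+x_{i+1}$ is singular, so the tangent lengths are \emph{not} recovered from the side lengths alone; they are nevertheless well-defined functions of the full distance matrix $[d_{ij}]$, since that matrix determines the polygon up to congruence and the $x_i$ are congruence invariants (the paper's own notation $x_n([d_{ij}])$ tacitly relies on the same fact). With that sentence fixed, and your orientation caveat (the polygon is convex and traversed in cyclic order, so a single rotation $R$ gives all inward normals) made explicit, the argument is complete and arguably cleaner than the one in the paper.
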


\begin{proof} We can assume that the result is true for $n\leq 4$.

We proceed in a similar way as in \cite{M}. We may suppose that the sum of two adjacent angles in $P$ is greater than $\pi$. Otherwise the sum of the angles would be less than or equal to $n\pi/2$, which is a contradiction, for $n>5$, with the fact that the sum of the angles is $(n-2)\pi$.

Let us assume that these angles correspond to $V_1,V_{n+1}$. Consider the $n$-gon $P'=(V_0,V_2,\ldots, V_{n-1})$, where $V_0$ is the point where $V_1V_2$ and $V_nV_{n+1}$ meet.

\begin{figure}[!ht]
\centering
\includegraphics[width=4.5cm]{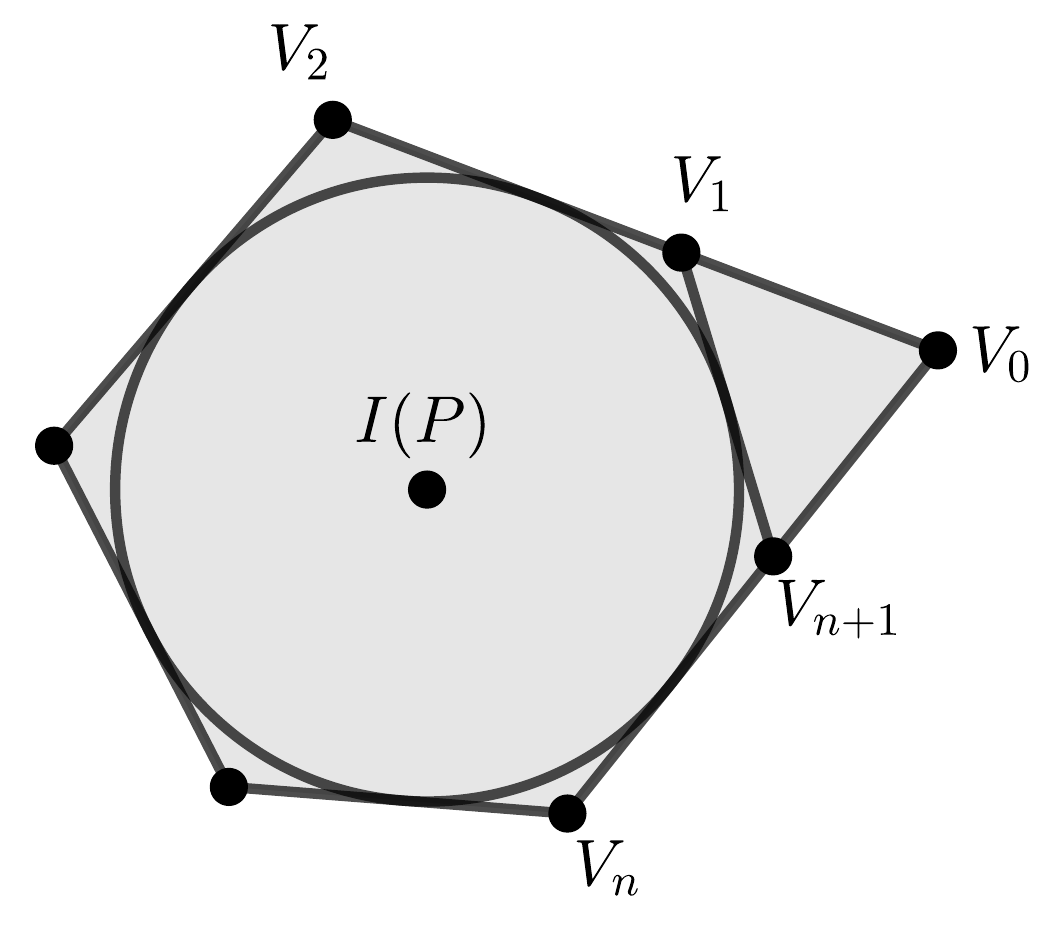}
\end{figure}

$\Phi_I(P)$ is the incenter of $P'$. Let us denote by $p'$ the perimeter of $P'$. Then, by hypothesis,
\begin{equation}\label{eq.incentrogorro} p'\Phi_I(P)=[(x_2+x_n)V_0+(x_0+x_3)V_2+\ldots+(x_{n-1}+x_0)V_n] \, .
\end{equation}

On the other hand, $\Phi_I(P)$ is the excenter relative to the vertex $V_0$ of the trigon $(V_0,V_1,V_{n+1})$ and therefore we have
\begin{equation} \label{eq.excenter}
2(x_0-x_1-x_{n+1})\Phi_I(P)=[-(x_1+x_{n+1})V_0+(x_0-x_{n+1})V_1+(x_0-x_1)V_{n+1}]\, .
\end{equation}

Note that, since $V_0,V_1,V_2$ and $V_0,V_{n+1},V_n$ are collinear we have that
\begin{equation}\label{eq.collinear}
(0,0)=(x_1+x_2+x_n+x_{n+1})V_0+(-x_0-x_2)V_1+(x_0-x_1)V_2+(x_0-x_{n+1})V_n+(-x_0-x_n)V_{n+1}\, .
\end{equation}

Substracting Equations \eqref{eq.collinear} and \eqref{eq.excenter} from Equation \eqref{eq.incentrogorro} we obtain the desired result.

\end{proof}


In  \cite[Corollary 7]{AM} the authors proved the following ``simple but surprising relation'':
\begin{theo} For any $P\in\mathcal I_n$, the incenter, the center of mass of the boundary and the center of mass of the lamina (see \cite[Example 13]{PS.C} additionally) of any $n$-gon are collinear.
\end{theo}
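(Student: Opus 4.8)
The plan is to write down closed-form expressions for all three points and then exhibit an explicit affine relation among them, from which collinearity is immediate. The engine of the argument is the triangulation of a tangential $n$-gon obtained by joining the incenter $I$ to each vertex: this decomposes $P$ into $n$ triangles $T_i=(V_i,V_{i+1},I)$, and because every side $V_iV_{i+1}$ is tangent to the incircle, the distance from $I$ to the line through $V_i,V_{i+1}$ equals the inradius $r$ for \emph{every} $i$. Thus each $T_i$ has base $d_{i,i+1}$ and height $r$, so $\mathrm{Area}(T_i)=\tfrac12 r\,d_{i,i+1}$ and the total area is $A=\tfrac12 r\,p$, where $p=\sum_i d_{i,i+1}$ is the perimeter.

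First I would record the center of mass of the boundary as the length-weighted average of the edge midpoints $M_i=\tfrac12(V_i+V_{i+1})$, namely $G_\partial=\tfrac1p\sum_i d_{i,i+1}M_i$. Next, since the center of mass of the lamina is the area-weighted average of the centroids of the tiles $T_i$, and the centroid of $T_i$ is $\tfrac13(V_i+V_{i+1}+I)=\tfrac23 M_i+\tfrac13 I$, I would compute
\[
G_A=\frac{\sum_i \mathrm{Area}(T_i)\bigl(\tfrac23 M_i+\tfrac13 I\bigr)}{\sum_i \mathrm{Area}(T_i)}
=\frac{\sum_i d_{i,i+1}\bigl(\tfrac23 M_i+\tfrac13 I\bigr)}{p},
\]
where the constant factor $\tfrac12 r$ cancels between numerator and denominator \emph{precisely because the height of every tile is the same value} $r$. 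Splitting the sum and using $\sum_i d_{i,i+1}=p$ then gives
\[
G_A=\tfrac23\,\frac{\sum_i d_{i,i+1}M_i}{p}+\tfrac13\,I=\tfrac23\,G_\partial+\tfrac13\,I.
\]

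Since the coefficients $\tfrac23$ and $\tfrac13$ sum to $1$, this exhibits $G_A$ as an affine combination of $G_\partial$ and $I$, so the three points are collinear; indeed $G_A$ lies two-thirds of the way along the segment from $I$ to $G_\partial$. The only subtle point — and hence what I regard as the main obstacle — is the uniform-height observation: it is exactly the tangential hypothesis that forces the height of each triangle $T_i$ to be the common value $r$, which is what allows the area weights $\tfrac12 r\,d_{i,i+1}$ to collapse onto the same length weights $d_{i,i+1}$ that define $G_\partial$. (For non-tangential $n$-gons the heights vary and no such clean identity survives.) Finally, I would remark that all three maps commute with similarities and are invariant under relabelling, so each is a center in the sense of Definition \ref{defi}; this situates the statement inside the present framework, but is not needed for the collinearity itself, which follows from the displayed affine identity alone.
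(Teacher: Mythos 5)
Your proof is correct. Note that the paper itself gives no proof of this statement --- it is quoted directly from \cite{AM}*{Corollary 7} --- and your argument is essentially the one in that reference: Apostol and Mnatsakanian likewise triangulate the circumscribing polygon from the incenter, use the tangency condition to give every triangle the common altitude $r$ so that the area weights $\tfrac{1}{2}r\,d_{i,i+1}$ collapse onto the boundary's length weights $d_{i,i+1}$, and arrive at the same affine identity $G_A=\tfrac{2}{3}G_\partial+\tfrac{1}{3}I$ (their ``surprising relation'' includes precisely the $2:3$ ratio of distances from the incenter that you observe). Every step checks out: since $\mathcal I_n$ consists of \emph{convex} tangential $n$-gons, the incenter lies inside $P$, so the triangles $T_i$ tile $P$ without overlap and the area-weighted average of their centroids is legitimately the center of mass of the lamina.
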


Thanks to our approach we can look, artificially, with no geometric intuition, for theorems similar to this one. Let us provide an example in the following result, inspired in a simple computation. We do not claim that it is very interesting geometrically, but it could have been difficult to identify by geometric means.

\begin{theo}  For any parallelogram, the centroid of the boundary belongs to the line passing through the centroid and the simple center.

\end{theo}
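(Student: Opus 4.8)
The plan is to show that for a parallelogram the three points involved — the vertex centroid $\Phi_C(P)$, the simple center $\Phi_S(P)$, and the centroid of the boundary — all collapse to a single point, namely the common midpoint $O$ of the two diagonals. Once this is established the statement is immediate, since three coincident points are trivially collinear. I would flag at the outset that this is exactly the degenerate phenomenon hinted at in the preceding remark (``we do not claim that it is very interesting geometrically''): because the centroid and the simple center turn out to coincide, the ``line through them'' is not uniquely determined, so the assertion is best read as collinearity of the three points.

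First I would fix a parallelogram $P=(V_1,V_2,V_3,V_4)$ labelled in cyclic order. Its defining property (opposite sides parallel and equal) is equivalent to $V_1+V_3=V_2+V_4$, so the two diagonals share a common midpoint $O:=\tfrac12(V_1+V_3)=\tfrac12(V_2+V_4)$. Then $\Phi_C(P)=\tfrac14(V_1+V_2+V_3+V_4)=\tfrac14(2O+2O)=O$. For the simple center, the formula for $n=4$ recorded in Example \ref{exam.simpleline4} gives $\Phi_S(P)=\tfrac{1}{2(d_{13}+d_{24})}(d_{13}V_1+d_{24}V_2+d_{13}V_3+d_{24}V_4)$; grouping opposite vertices yields $d_{13}(V_1+V_3)+d_{24}(V_2+V_4)=2O(d_{13}+d_{24})$, so $\Phi_S(P)=O$ as well. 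Geometrically this just says that the simple center is an affine combination of the two diagonal midpoints, which in a parallelogram are the single point $O$.

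Finally I would compute the centroid of the boundary $G_\partial$, the average of the edge midpoints weighted by edge lengths. Using that opposite sides are equal, $d_{12}=d_{34}=:a$ and $d_{23}=d_{41}=:b$, the weighted sum of midpoints is $a\tfrac{V_1+V_2}{2}+b\tfrac{V_2+V_3}{2}+a\tfrac{V_3+V_4}{2}+b\tfrac{V_4+V_1}{2}=\tfrac12(a+b)(V_1+V_2+V_3+V_4)=2(a+b)\,O$, while the total length is $2(a+b)$, whence $G_\partial=O$. Thus $\Phi_C(P)=\Phi_S(P)=G_\partial=O$ and the claim follows. There is no genuine obstacle in the argument; the only point requiring care is precisely the degeneracy already noted, namely that the centroid and the simple center coincide, so the conclusion should be phrased as collinearity of the three coincident points rather than as incidence with a properly determined line.
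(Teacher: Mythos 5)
Your proof is correct, but it takes a genuinely different route from the paper's. The paper never locates the three points: it takes the linear system for the central line computed in Example \ref{exam.simpleline4} (namely $\gamma_1=\gamma_3$, $\gamma_2=\gamma_4$, $\gamma_1+\gamma_2+\gamma_3+\gamma_4=1$), writes the boundary centroid as $\Phi_B(P)=\frac{d_{14}+d_{12}}{2p}V_1+\ldots+\frac{d_{43}+d_{41}}{2p}V_4$, and checks that these coefficients satisfy the system exactly because opposite sides of a parallelogram are equal --- an illustration of the equational machinery of Section \ref{sec:lines}. You instead show that all three points collapse to the common diagonal midpoint $O$ via $V_1+V_3=V_2+V_4$, which is more elementary and, importantly, sharper: it exposes that the theorem holds only degenerately. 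Your identity $\Phi_C(P)=\Phi_S(P)$ shows that parallelograms lie outside the family on which Example \ref{exam.simpleline4} declares the line to be defined (``the family of $n$-gons such that these centers do not coincide''), and correspondingly the solution locus of the system degenerates: any coefficients with $x_1=x_3$, $x_2=x_4$ and $x_1+x_2+x_3+x_4=1$ give $x_1(V_1+V_3)+x_2(V_2+V_4)=(2x_1+2x_2)O=O$, the ``point'' case of Lemma \ref{lemm.n-2}. So the paper's verification is formally sound (the coefficients of $\Phi_B$ do satisfy the system, which suffices for membership in the locus), but the geometric phrasing ``the line passing through the centroid and the simple center'' is ill-posed for parallelograms, and your reading of the conclusion as coincidence of the three points is the accurate one. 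What each approach buys: the paper's computation demonstrates how incidence can be tested purely through the coefficient equations without computing any center, which is the methodological point of that section; your computation pinpoints the actual geometry and flags a degeneracy that the paper's own proof quietly passes over.
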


\begin{proof}  According to Example \ref{exam.simpleline4}, the equations of this line are
$$\begin{bmatrix}1 & 1 & 1 & 1 \\  1 & 0 & -1 & 0 \\0 & 1 & 0 & -1\end{bmatrix} \begin{bmatrix}\gamma_1\\ \gamma_2\\ \gamma_3\\ \gamma_4\end{bmatrix} =\begin{bmatrix}1 \\ 0 \\ 0 \end{bmatrix} \, .$$

Since the centroid of the boundary is
$$\Phi_B(P)=\frac{d_{14}+d_{12}}{2p}V_1+\ldots+\frac{d_{43}+d_{41}}{2p}V_4 \, ,$$

\noindent where $p$ denotes the perimeter, we can see that the coefficients satisfy the equations if and only if
$$d_{14}+d_{12}=d_{32}+d_{34}\qquad \mbox{and} \qquad d_{12}+d_{32}=d_{43}+d_{41} \, ,$$

\noindent which is guaranteed for parallelograms.

\end{proof}

\section{Final Remarks}

In this section we will discuss some questions that remain open and possible lines for future work.

\subsection{Problems of coincidence and collinearity}

The problem of deciding if two centers coincide (resp. three centers are collinear) for a fixed $n$-gon $P$ is easy to solve. A very interesting problem would be the following. 

\noindent  \textbf{Problem of Coincidence:} \emph{Given two $n$-gon center functions $g_1,g_2$, determine the set}
$$ \mathcal G_{g_1,g_2}=\{P\in\mathcal P_n:\Phi_{g_1}(P)=\Phi_{g_2}(P)\} .$$

\noindent  \textbf{Problem of Collinearity:} \emph{Given three $n$-gon center functions $g_1,g_2,g_3$, determine the set}
$$ \mathcal G_{g_1,g_2,g_3}=\{P\in\mathcal P_n:\Phi_{g_1}(P),\Phi_{g_2}(P),\Phi_{g_3}(P)\text{ are collinear}\}  .$$

\medskip

Regarding the first problem note that, in general, the elements satisfying that 
\begin{equation}\label{eq.final}(g_1([d_{ij}]),\ldots,g_1([d_{\rho^{n-1}(i)\rho^{n-1}(j)})]) \mbox{ and } (g_2([d_{ij}]),\ldots,g_2([d_{\rho^{n-1}(i)\rho^{n-1}(j)}]))\; \mbox{are proportional}\end{equation}

\noindent  are in $G_{g_1,g_2}$ but the converse is not true.

\begin{exam} \label{exam.CG} For $n=4$, consider the tetragon center functions $g_1,g_2:\widetilde{\mathcal P}_4\to\mathbb R$ given by $g_1([d_{ij}])=1$ and $g_2([d_{ij}])=d_{13}$ (corresponding to the centroid and the simple center). The set of tetragons satisfying Equation \eqref{eq.final} is the set of tetragons with equal diagonals. But any rhombus with different diagonals is also in $\mathcal G_{g_1,g_2}$.

\end{exam}

\subsection{Polygons satisfying a non-linear cyclic equation}

The equations $\Phi_{g_1}(P)=\Phi_{g_2}(P)$ are, under some manipulations, of the type
$$h(d_{ij})V_1+\ldots+h(d_{\rho^{n-1}(i)\rho^{n-1}(j)})V_n=0,\\$$ 
where $h$ is ``almost'' (with the possible exception of the homogeneity property) a center function  and $h(d_{ij})+\ldots+h(d_{\rho^{n-1}(i)\rho^{n-1}(j)})=0$. An algebraic study of these equations would provide more tools to solve Problems of Coincidence. 
Since these equations share some properties with those studied in \cite{BS}, it seems to us that this book could be a good starting point.

\subsection{Centers of finite sets of points} \label{comment3}

As we already explained in the introduction, in \cite{E} there is a definition of center for $n$-simplices in $\mathbb R^d$. This definition can be naturally extended for sets of $n$-points $\{V_1,\ldots,V_n\}$  (instead of for $n$-tuples $(V_1,\ldots,V_n)$) in $\mathbb R^d$. It is very similar to Definition \ref{defi}.

This definition for sets of points does not take into account any relation of adjacency between vertices. In $\mathbb R^2$, the notion of $n$-gon considers a very special and simple type of adjacency (all the vertices are ``equal'': they connect two edges). But, for instance, the ways to connect a set of points is much more complicated. Also it is possible to provide a definition of central line in this setting.

For instance, the centroid (as we have defined it here) is a center that does not take into account any adjacency relation. Many problems in Applied Mathematics require this other general approach. For example, in the study of tumor growth (see \cite{J}) it is important to determine the centroid of the tumor (maybe other centers, apart from the centroid, could be relevant). Also, in Computational Chemistry central lines could be useful in problems concerning porosity. In this kind of problems, the target is to find trajectories to put an atom inside a ``cage'' (a molecule whose vertices are other atoms) in order to change the properties of the material (see \cite{G}).

\subsection{Symmetries, coincidences and collinearity}

As explained in \cites{AS.Q, PS.C}, and also in this paper, there is a strong relation between symmetries of the $n$-gon and coincidence and collinearity of centers.

It would be desirable to find a characterization (not only a list of sufficient conditions) for a $n$-gon to have two non-coincident centers (equiv. a central vector) or three non-collinear centers (equiv. two linearly independent central vectors).

Moreover, there are two very important questions: 

\begin{enumerate}
    \item Can we find two  centers $\Phi_1,\Phi_2$, defined for every $P\in\mathcal P_n$, playing the role of the incenter and the centroid for $n=3$, that is, the set of fixed points of the symmetry group of $P$ consists of a single point if and only if $\Phi_1(P)=\Phi_2(P)$?

    \item Can we find three  centers $\Phi_1,\Phi_2,\Phi_3$, defined for every $P\in\mathcal P_n$, playing the role of the incenter, the centroid and the circumcenter for $n=3$, that is, the set of fixed points of the symmetry group of $P$ consists of a line if and only if $\Phi_1(P)$, $\Phi_2(P)$ and $\Phi_3(P)$ are collinear?

\end{enumerate}

In other words, we should look for three centers whose relative position determines the symmetry group of $P$.

\subsection{The coefficients of the equations of a central line}

The coefficients of the equation of a central line for the case $n=3$ have a nice interpretation: they are the coefficients of a center (see Remark \ref{theo.main3}) . Is there any interpretation of the coefficients of the linear system of equations of a central line for $n\geq 4$ (see (c) in Theorem \ref{theo.main})?

\subsection{Other central objects}

We have already mentioned the possibility of defining other ``central objects'' apart from points and lines, such as vectors and lengths.
A natural question is to define central curves. Due to their relation to problems in plane geometry, it would be desirable to define ``central conics'', for a start. 
They would also be of great importance in the context of Section \ref{comment3} (problems of approximation).

\subsection{Centers of Jordan curves}

A natural generalization of the concepts of center and central line for $n$-gons are the concepts of center and central line for Jordan curves. 
There exist many examples in the bibliography concerning points called ``centers'' for Jordan curves, such as the equichordal points or the power points (see \cite{P} and the references therein).
In this case, the expression in part (b) of Theorem \ref{theo.interpretation} would translate into an expression involving integrals instead of sums.




\end{document}